\documentclass[12pt,bezier]{article}
\usepackage{times}
\usepackage{booktabs}
\usepackage{pifont}
\usepackage{floatrow}
\usepackage{caption}
\usepackage{mathrsfs}
\usepackage[fleqn]{amsmath}
\usepackage{amsfonts,amsthm,amssymb,mathrsfs,bbding}
\usepackage{txfonts}
\usepackage{graphics,multicol}
\usepackage{graphicx}
\usepackage{color}
\usepackage{amssymb}
\usepackage{caption}
\usepackage{cite}
\usepackage{latexsym,bm}
\usepackage{indentfirst}
\usepackage{color}
\usepackage[colorlinks=true,anchorcolor=blue,filecolor=blue,linkcolor=blue,urlcolor=blue,citecolor=blue]{hyperref}
\usepackage{extarrows}
\usepackage{cite}
\usepackage{latexsym,bm}
\usepackage{mathtools}
\evensidemargin=\oddsidemargin\topmargin=-1.5cm

\newtheorem{thm}{Theorem}[section]
\newtheorem{case}{Case}

\newtheorem{prob}{Problem}[section]
\newtheorem{claim}{Claim}

\newtheorem{lemma}{Lemma}[section]
\newtheorem{cor}{Corollary}[section]

\newtheorem{observation}{Observation}[section]

\theoremstyle{definition}

\addtocounter{section}{0}

\begin{document}
\title{Spectral radius and rainbow $k$-factors in a bipartite graph family\footnote{Supported by the National Natural Science Foundation of China
{(No. 12371361)} and Distinguished Youth Foundation of Henan Province {(No. 242300421045)}.}}
\author{\bf  Meng Chen$^{a}$, {\bf Ruifang Liu$^{a}$}\thanks{Corresponding author.
E-mail addresses: cm4234635@163.com (M. Chen), rfliu@zzu.edu.cn (R. Liu).}\\
{\footnotesize $^a$ School of Mathematics and Statistics, Zhengzhou University, Zhengzhou, Henan 450001, China} }
\date{}

\maketitle
{\flushleft\large\bf Abstract}
Let $\mathcal{G}=\{G_1, G_2, \ldots , G_{kn}\}$ be a family of balanced bipartite graphs on the same vertex set $[2n]$. A rainbow $k$-factor of $\mathcal{G}$ is defined as a $k$-factor such that any two distinct edges come from different graphs in $\mathcal{G}.$ In this paper, we provide a tight sufficient condition in terms of the spectral radius for a family of balanced bipartite graphs $\mathcal{G}$ to contain a rainbow $k$-factor. Furthermore, we completely characterize the corresponding spectral extremal graph.

\begin{flushleft}
\textbf{Keywords:} Bipartite graphs, Rainbow $k$-factor, Spectral radius, Extremal graph
\end{flushleft}
\textbf{AMS Classification:} 05C50; 05C35

\section{Introduction}
Throughout this paper, we only consider simple undirected graphs. Let $V(G)$ and $E(G)$ denote the vertex set and edge set of $G,$ respectively. For any vertex $u \in V(G),$ let $N_G(u)$ the neighborhood of $u$ in $G.$ The union of graphs $G_1$ and $G_2$ is denoted by $G_1\cup G_2.$ Let $G$ be a bipartite graph with the bipartition $(X, Y).$ We denote by $\widehat{G}$ its quasi-complement, where $V(\widehat{G})=V(G)$ and for any $x\in X$ and $y\in Y,$ $xy\in E(\widehat{G})$ if and only if $xy\notin E(G).$ Let $G_1$ and $G_2$ be two bipartite graphs with the bipartition $(X_1, Y_1)$ and $(X_2, Y_2),$ respectively. We use $G_1\sqcup G_2$ to denote the bipartite graph obtained from $G_1 \cup G_2$ by adding all possible edges between $X_1$ and $Y_2$ and all possible edges between $Y_1$ and $X_2.$ A bipartite graph with the bipartition $(X, Y)$ is called a balanced bipartite graph if $|X|=|Y|.$

The adjacency matrix of $G$ is the $|V(G)|\times|V(G)|$ matrix $A(G)=(a_{uv}),$ where $a_{uv}$ is the number of edges joining $u$ and $v.$ The eigenvalues of $G$ are defined as the eigenvalues of its adjacency matrix $A(G).$ The maximum modulus of eigenvalues of $A(G)$ is called the spectral radius of $G$ and denoted by $\rho(G).$

Over the past seven decades, factor theory has occupied a central role in graph theory. An $[a, b]$-factor of a graph $G$ is a spanning subgraph $F$ such that $a \leq d_F(v) \leq b$ for each $v \in V (G)$. If $a=b=k,$ then a $[a,b]$-factor is a $k$-factor. In particular, a perfect matching is a $1$-factor, and a Hamilton cycle is a connected $2$-factor. Motivated by the classical work of Tutte \cite{Tutte1947, Tutte1952} and Hall \cite{Hall1935}, extensive research has been conducted to characterize the structural properties which ensure the existence of $k$-factors in graphs \cite{Enomoto1988, Katerinis1987, Lu2020, Ore1959, Rado1949}. The existence problem of $k$-factors has also been extensively investigated under spectral radius conditions. In 2010, Fiedler and Nikiforov \cite{Fiedler2010} presented a spectral condition for the existence of a Hamilton cycle in a graph, and their result was subsequently improved in \cite{Chen2018, Ge2020}. In 2021, O \cite{O2021} provided a sufficient condition based on the spectral radius to guarantee the existence of a perfect
matching in a connected graph. By extending the above results to general graph factors, Cho et al. \cite{Cho2021} proposed a conjecture on a spectral radius condition for the existence of $[a,b]$-factors in graphs. Fan, Lin, and Lu \cite{Fan2022} proved the conjecture for $n\geq 3a+b-1.$ Wei and Zhang\cite{Wei2023} completely confirmed this conjecture. Hao and Li \cite{Hao2024}
strengthened the result of Wei and Zhang.

A bipartite graph $G$ contains a $k$-factor only if $G$ is a balanced bipartite graph. For balanced bipartite graphs, the existence of $k$-factors has also received considerable attention\cite{Bai2023, Chiba2017, Ore1957}. From the spectral perspective, Lu, Liu and Tian \cite{Lu2012} presented sufficient conditions in terms of the spectral radius for the existence of a Hamilton cycle in balanced bipartite graphs. Fan and Lin\cite{Fan2026} provided a spectral radius condition for a balanced bipartite graph to contain a $k$-factor, where $k\geq 2.$
\begin{thm}[Fan and Lin\cite{Fan2026}]\label{thm1.1}
Let $2\leq k\leq \frac{n}{2}-1,$ and let $G$ be a connected balanced bipartite graph with order $n.$ If
\begin{eqnarray*}
    \rho(G)\geq \rho(K_{k-1, n-1}\sqcup \widehat{K_{n-k+1, 1}}),
\end{eqnarray*}
then $G$ contains a $k$-factor unless $G\cong K_{k-1, n-1}\sqcup \widehat{K_{n-k+1, 1}}.$
\end{thm}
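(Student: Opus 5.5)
Assume $G$ has bipartition $(X,Y)$ with $|X|=|Y|=n$, and suppose $G$ has no $k$-factor. The plan is to combine a bipartite factor criterion with spectral comparison and edge-shifting. The criterion is Ore's (Gale--Ryser type) condition: a balanced bipartite graph has a $k$-factor if and only if
\[
k|S|\le \sum_{y\in Y\setminus T} d_{S}(y)+k|T| \quad\text{for all } S\subseteq X,\ T\subseteq Y .
\]
Equivalently, write $e(S,Y\setminus T)$ for the number of edges between $S$ and $Y\setminus T$. Then $G$ has no $k$-factor exactly when some pair $(S,T)$ satisfies
\[
e(S,Y\setminus T)\le k(|S|-|T|)-1 .
\]

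Fix such a violating pair, and set $s=|S|$, $t=|T|$. By the strict monotonicity of $\rho$ under adding edges (Perron--Frobenius, using connectivity), we may add every edge that keeps the violation. This includes all edges between $X\setminus S$ and $Y$, all edges between $S$ and $T$, and as many edges between $S$ and $Y\setminus T$ as allowed. The result is a graph whose only restriction is on the sparse bipartite block between $S$ and $Y\setminus T$, which carries at most $k(s-t)-1$ edges.

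Next we optimise the placement of those edges and the parameters $s,t$. Within the block, Kelmans-type shifting, or comparison via the Perron vector, shows that $\rho$ is maximised when the edges are concentrated on as few vertices as possible. Ideally this means one vertex of $S$ missing almost all of $Y\setminus T$, with the remaining vertices of $S$ nearly complete to it. Among the parameter choices, the candidate extremal structure is $s=n-k+1$ and $t=k-1$ after relabelling, with a single deficient vertex. This is exactly $K_{k-1,n-1}\sqcup\widehat{K_{n-k+1,1}}$: the $k-1$ vertices of $T$ are joined to everything, and one vertex of $S$ has degree only $k-1$.

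We then compare this candidate with the other extreme families, such as $t=0$ with $s$ small, or a vertex of degree less than $k$. For each family we bound $\rho$ via an equitable quotient matrix and its characteristic polynomial, or via the bound $\rho^2\le$ maximum over edges of products of degree sums. The goal is to show that every other family has strictly smaller spectral radius whenever $2\le k\le n/2-1$.

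I expect this comparison to be the main obstacle. It needs sharp estimates of the largest root of a cubic or quartic quotient polynomial, uniformly over the range of $s$ and $t$. My first attempt would be to evaluate the competing quotient polynomial at $\rho^*=\rho(K_{k-1,n-1}\sqcup\widehat{K_{n-k+1,1}})$ and show that it is positive there and increasing beyond. This uses the constraint $k\le n/2-1$ to control the sign.

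Finally, equality analysis shows that $\rho(G)\ge\rho^*$ forces every inequality above to be tight. Hence $G$ is the extremal graph itself; any proper subgraph of it, or any other configuration, would have strictly smaller spectral radius.
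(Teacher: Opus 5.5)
\textbf{There is a genuine gap: the step you defer as ``the main obstacle'' is the whole content of the theorem, and your guidance for it points the wrong way.}

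What you have is sound. Ore's criterion is stated correctly. The closure step is also legitimate: you make $X\setminus S$ complete to $Y$, make $T$ complete to $X$, and put exactly $k(s-t)-1$ edges into the block $S\times(Y\setminus T)$. Everything after that is a plan, and the comparison you postpone is never carried out.

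Your identification of the extremal configuration is wrong. Write $B_{n,k}=K_{k-1,n-1}\sqcup\widehat{K_{n-k+1,1}}$; it misses only $n-k+1$ edges of $K_{n,n}$. So every $S\subseteq X$, $T\subseteq Y$ with $|S|=n-k+1$ and $|T|=k-1$ satisfies $e(S,Y\setminus T)\ge (n-k+1)(n-k)>k(n-2k+2)-1$. No such pair witnesses the absence of a $k$-factor. The pair that does is $S=\{v\}$, $T=\emptyset$ (or $S=X$, $T=Y\setminus\{v\}$), where $v$ is the vertex of degree $k-1$. This is a one-row or one-column block with $s-t=1$.

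Your comparison method also does not reach all cases. After bi-shifting, the block is an arbitrary Ferrers diagram with $k(s-t)-1$ cells, so there is no fixed small equitable partition. Evaluating ``the competing quotient polynomial'' cannot cover the family, and shifting alone does not tell you which shape maximises $\rho$.

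\textbf{The missing idea: the crude bound $\rho\le\sqrt{e}$ already suffices.} For bipartite $H$ the spectrum is symmetric, so $2\rho(H)^2\le\sum\lambda_i^2=2e(H)$, i.e.\ $\rho(H)\le\sqrt{e(H)}$. Put $r=n-t$, so $s+r\ge n+1$. Your closed-up graph misses exactly
\[ f(s,r)=sr-k(s+r-n)+1=(s-k)(r-k)+k(n-k)+1 \]
edges of $K_{n,n}$. The cases are as follows.
\begin{itemize}
\item If $s,r\ge k$, then $f\ge k(n-k)+1\ge n$, because $k(n-k)+1-n=(k-1)(n-k-1)\ge 0$.
\item If $2\le s<k$, then $f\ge s(n-k)+1\ge 2(n-k)+1\ge n$. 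The case $2\le r<k$ is symmetric.
\item $s,r<k$ is impossible, since $s+r\ge n+1>2k-2$.
\end{itemize}
So whenever $s,r\ge 2$ we get $e(G)\le n^2-n$ and $\rho(G)\le\sqrt{n(n-1)}<\rho(B_{n,k})$. The last inequality holds because $B_{n,k}$ is connected and properly contains $K_{n,n-1}\cup K_1$.

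Otherwise $s=1$ (forcing $t=0$) or $r=1$ (forcing $s=n$). In either case some vertex has degree at most $k-1$, so $G$ is a spanning subgraph of a copy of $B_{n,k}$. Perron--Frobenius then gives $\rho(G)<\rho(B_{n,k})$ unless $G\cong B_{n,k}$, which also settles the equality case. The argument uses only $n\ge 2k-1$.

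For comparison, the paper does not prove this statement; it quotes it from Fan and Lin. Its proof of Theorem~\ref{thm1.2}, specialised to $G_1=\cdots=G_{kn}=G$, recovers it by bi-shifting rather than a factor criterion. Lemma~\ref{lem3.3} forces the edges $e_{k+1},\dots,e_{n-1}$ into $S(G)$. Then either $S(G)\cong B_{n,k}$, or $M_1\cup\cdots\cup M_k$ is an explicit $k$-factor of $S(G)$.
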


Let $\mathcal{G}= \{G_1, G_2,\ldots, G_k\}$ be a family of not necessarily distinct graphs with the same vertex
set $V.$ Let $H$ be a graph with $k$ edges on the vertex set $V(H)\subseteq V.$ We say that $\mathcal{G}$
contains a rainbow copy of $H$ if there exists a bijection $\phi: E(H) \rightarrow[k]$ such that $e \in E(G_{\phi(e)})$ for all $e\in E(H),$ where $[k]=\{1, 2, \ldots, k\}.$ In other words, each edge of $H$ comes from a different graph $G_i$. In particular, if
$G_1=G_2=\cdots=G_k$, then the rainbow copy of $H$ reduces to the classical copy of $H.$ The following general question was proposed by Joos and Kim in \cite{joos2020rainbow}.
\begin{prob}\label{prob1.1}
Let $H$ be a graph with $k$ edges and $\mathcal{G}= \{G_1, G_2,\ldots, G_k\}$ be a family of not necessarily distinct graphs on the same vertex set $V.$ Which properties and constraints imposed on the family $\mathcal{G}$ can yield a rainbow copy of $H$?
\end{prob}
Focusing on Problem \ref{prob1.1}, researchers have investigated whether classical results can be extended to their rainbow versions. Joos and Kim \cite{joos2020rainbow} proved a result that can be viewed as a rainbow version of Dirac's theorem. Aharoni et al. \cite{Aharoni2020} provided a rainbow version of Mantel's theorem. In 2021, Cheng, Wang, and Zhao \cite{Cheng2021} considered rainbow pancyclicity and the existence of rainbow Hamilton paths. Subsequently, Cheng et al. \cite{Cheng2023} used the probabilistic method to prove an asymptotic result for the rainbow version of the Hajnal-Szemer\'{e}di theorem. Recently, Montgomery, M\"{u}yesser, and Pehova \cite{Montgomery2020} obtained some asymptotically tight minimum degree conditions for a family of $n$-graphs to have a rainbow $F$-factor or a rainbow tree with maximum degree $o(\frac{n}{\log n}).$ Li, Li, and Li \cite{Li2023} studied the existence of rainbow spanning trees and rainbow Hamilton paths in a family of graphs under Ore-type conditions. Moreover, they investigated rainbow vertex-pancyclicity and rainbow panconnectedness, as well as the existence of rainbow cliques in a family of graphs under Dirac-type conditions. The study of rainbow structures under spectral conditions has received considerable attention in recent years. Guo et al. \cite{guo2023spectral} gave a sufficient condition in terms of the spectral radius for the existence of a rainbow matching in a family of graphs. He, Li, and Feng \cite{he2024spectral} provided a spectral radius condition for a family of graphs to admit a rainbow Hamilton path. Moreover, they also gave a spectral radius condition on a family of graphs to guarantee a rainbow linear forest of given size. Zhang and van Dam \cite{zhang2025} provided a sufficient condition based on size and spectral radius for the existence of a rainbow Hamilton cycle in a family of graphs, respectively. Zhang and Zhang \cite{zhangzhang2025} gave a sufficient condition in terms of the spectral radius for a family of graphs to contain a rainbow $k$-factor.

The bipartite version of Problem \ref{prob1.1} is also a natural and intriguing problem. Bradshaw\cite{Bradshaw2021} proposed  minimum degree conditions for a family of bipartite graphs to admit a rainbow Hamilton cycle and a rainbow perfect matching, respectively. Furthermore, Bradshaw\cite{Bradshaw2021} proved a stronger result which states that a family of bipartite graphs is bipancyclic under the minimum degree condition. Motivated by \cite{Bradshaw2021}, Hu et al.\cite{Hu2024} investigated the minimum degree condition of vertex-bipancyclicity in a family of bipartite graphs. Chen, Liu, and Yuan\cite{Chen2026} presented tight sufficient conditions in terms of the
spectral radius for a family of bipartite graphs to admit a rainbow Hamilton path and cycle, respectively.

 Based on Theorem \ref{thm1.1} and Problem \ref{prob1.1}, a natural and interesting problem arises.
\begin{prob}\label{prob1.2}
What is the sufficient condition in terms of the spectral radius for the existence of rainbow $k$-factors in a family of bipartite graphs?
\end{prob}
Shi, Li and Chen\cite{Shi2024} provided a sufficient condition in terms of the spectral radius for the existence of a rainbow matching in a family of bipartite graphs.
\begin{thm}[Shi, Li and Chen\cite{Shi2024}]\label{lem2.5}
    Let $\mathcal{G}=\{G_1, G_2, \ldots, G_k\}$ be a family of spanning subgraphs of $K_{n,n}.$ If
    $$\rho(G_i)\geq \sqrt{(k-1)n},$$ for all $i\in[k],$ then $\mathcal{G}$ contains a rainbow matching unless $G_1=G_2=\dots=G_k\cong K_{k-1,n}\cup (n-k+1)K_1.$
\end{thm}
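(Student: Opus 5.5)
We plan to reduce the spectral hypothesis to an edge-counting hypothesis and then prove a rainbow Turán-type statement for matchings in bipartite families. The basic spectral tool is the classical inequality for bipartite graphs, $\rho(G)^2\le e(G)$, with equality exactly when the non-isolated part of $G$ is a complete bipartite graph. Let $\nu(G_i)$ denote the matching number of $G_i$. From $\rho(G_i)\ge\sqrt{(k-1)n}$ we get $e(G_i)\ge (k-1)n$ for every $i\in[k]$. If $e(G_i)=(k-1)n$, then $G_i$ is complete bipartite plus isolated vertices. Its parts $s\times t$ satisfy $s,t\le n$ and $st=(k-1)n$, which forces $G_i\cong K_{k-1,n}\cup(n-k+1)K_1$ or $G_i\cong K_{n,k-1}\cup(n-k+1)K_1$.

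For a single graph we will also record the following. By K\"onig's theorem, if $\nu(G_i)\le k-1$ then $G_i$ has a vertex cover $S=S_X\cup S_Y$ with $|S_X|=a$, $|S_Y|=b$ and $a+b\le k-1$. Hence $e(G_i)\le (a+b)n-ab\le (k-1)n$. Equality holds only if $ab=0$, $a+b=k-1$ and all edges at $S$ are present, that is, only for the extremal graph.

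The main step is the edge version: if $e(G_i)\ge (k-1)n$ for all $i\in[k]$, then $\mathcal{G}$ has a rainbow matching of size $k$ unless all $G_i$ are the same extremal graph $K_{k-1,n}\cup(n-k+1)K_1$ (up to the side of the bipartition). We prove this by induction on $k$; the case $k=1$ is trivial since $e(G_1)\ge 0$ with a nonempty graph, or in the spectral setting $\rho(G_1)\ge 0$. For the induction step, the graphs $G_1,\dots,G_{k-1}$ satisfy $e(G_i)\ge (k-1)n>(k-2)n$. Hence they are not the extremal configuration for $k-1$, and they contain a rainbow matching $M=\{x_jy_j: j\in[k-1]\}$ with $x_jy_j\in E(G_j)$.

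If $G_k$ has an edge disjoint from $V(M)$ we are done. Otherwise every edge of $G_k$ meets $V(M)$. We then run an alternating/swapping argument. For each $j$, if $G_k$ contains an edge at $x_j$ and also $G_j$ contains an edge at $y_j$ to vertices outside $V(M)$ (or symmetric variants), we replace $x_jy_j$ accordingly and obtain a rainbow $k$-matching. Assuming no swap succeeds, for each $j$ at most one of the two endpoints of $x_jy_j$ can carry ``outside'' edges across the pair $\{G_j,G_k\}$. A counting of edges then gives $e(G_j)+e(G_k)\le 2(k-1)n$, with the overcounting within $V(M)$ controlled by the term $-ab$ as in the K\"onig bound. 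Combined with $e(G_j),e(G_k)\ge (k-1)n$, equality must hold throughout. Equality forces $G_k$ to be extremal, with its cover consisting of $k-1$ vertices on one side, namely one endpoint of each edge of $M$. It also forces each $G_j$ to share that same cover and hence to coincide with $G_k$. To make this rigorous we will choose $M$ among all rainbow $(k-1)$-matchings (and all choices of which $k-1$ colours are used) so as to maximise $|E(G_k)\setminus V(M)\text{-incident}|$, or some similar potential. This is the standard device for turning local non-swappability into a global structure.

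Finally we translate back to the spectral statement. If $\mathcal{G}$ has no rainbow $k$-matching, the edge version forces every $G_i$ to be the extremal graph, all on the same side. These have $\rho=\sqrt{(k-1)n}$, so they are precisely the exceptional family in the statement; otherwise a rainbow matching exists.

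The main obstacle is the swapping step: converting ``no augmentation of $M$ by an edge of $G_k$, even after exchanges'' into the sharp bound $e(G_j)+e(G_k)\le 2(k-1)n$ together with its equality case. This requires carefully handling edges of $G_k$ lying entirely inside $V(M)$ and long alternating exchanges involving several colours. If the direct pairwise count fails, we will instead use an auxiliary-digraph argument on the colours $1,\dots,k-1$, in the spirit of rainbow matching proofs of Aharoni--Berger type, and take a reachability closure to find the cover.
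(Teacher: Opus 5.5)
The paper does not prove this statement. It is quoted from Shi, Li and Chen and used only through Corollary~\ref{cor1.1}, so your argument has to stand on its own. It has a genuine gap exactly at the step you flag.

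\textbf{The edge version you take as the main step is false.} For $n=k=2$ let $G_1=\{x_1y_1,x_2y_2\}$ and $G_2=\{x_1y_2,x_2y_1\}$. Each has $(k-1)n=2$ edges, and every edge of $G_1$ meets every edge of $G_2$. So there is no rainbow $2$-matching, yet neither graph is $K_{1,2}\cup K_1$. This family passes through your whole sketch. With $M=\{x_1y_1\}$, every edge of $G_2$ meets $V(M)$ and no exchange is possible. Contrary to your ``at most one endpoint'' claim, both endpoints of $x_1y_1$ have outside neighbours in $G_2$. The count $e(G_1)+e(G_2)=2(k-1)n$ holds with equality, but $G_2$ is not extremal. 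So the exceptional family cannot be identified from edge counts alone. You record, and then discard, the fact that $e(G_i)=(k-1)n$ together with $\rho(G_i)^2\ge (k-1)n$ forces $\rho(G_i)^2=e(G_i)$, hence $G_i$ is complete bipartite plus isolated vertices. That fact must be built into the hypothesis of the main step. Also, $st=(k-1)n$ with $s,t\le n$ does not force $\{s,t\}=\{k-1,n\}$: for $n=12$, $k=4$, both $K_{6,6}$ and $K_{4,9}$ meet the spectral bound with equality. What your K\"onig bound does show is that $K_{k-1,n}$ is the only such graph with matching number below $k$.

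\textbf{The strict part is not established either.} The local non-exchange condition at index $j$ only restricts edges of $G_j\cup G_k$ joining $\{x_j,y_j\}$ to vertices outside $V(M)$. It says nothing about the other edges of $G_j$: those at $x_i,y_i$ for $i\neq j$, those inside $V(M)$, or those disjoint from $V(M)$. If $G_k$ has no edge from $\{x_j,y_j\}$ to the outside, the condition holds for every $G_j$, including $G_j=K_{n,n}$. So no bound $e(G_j)+e(G_k)\le 2(k-1)n$ can be extracted from it. Showing that $e(G_i)>(k-1)n$ for all $i$ yields a rainbow $k$-matching is a genuinely global statement (the bipartite case of a theorem of Aharoni and Howard). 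Your text defers exactly this to an unspecified potential or auxiliary digraph. The mixed case, where some $G_i$ are extremal and others have more edges, is never treated.

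\textbf{A route that avoids exchanges.} The machinery of this paper bypasses exchanges altogether.
\begin{itemize}
\item Bi-shift every $G_i$. This does not decrease $\rho$ by Lemma~\ref{lem2.1}. The analogue of Lemma~\ref{lem2.3} for matchings holds by a one-edge replacement argument, so a rainbow matching of the shifted family pulls back to $\mathcal{G}$.
\item Suppose a bi-shifted $S_i$ misses $\{p,n+k+1-p\}$ for some $p\in[k]$. Then Observation~\ref{obs::1} shows $S_i$ is covered by $p-1$ vertices of $X$ and $k-p$ vertices of $Y$.
\item Hence $(k-1)n\le\rho(S_i)^2\le e(S_i)\le (k-1)n-(p-1)(k-p)$, which forces $p\in\{1,k\}$ and $S_i\cong K_{k-1,n}\cup(n-k+1)K_1$.
\item So each $S_i$ contains all, or all but one, of the canonical edges $\{p,n+k+1-p\}$. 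Hall's theorem gives a rainbow matching unless all $S_i$ are the same extremal graph.
\item That last case is pulled back to the $G_i$ and settled by a distinct-representatives argument as in Lemma~\ref{lem3.2}.
\end{itemize}
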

By taking $k=n$ in Theorem \ref{lem2.5}, one can immediately obtain a sufficient condition based on the spectra radius for the existence of a rainbow perfect matching in a family of balanced bipartite graphs, which answers Problem \ref{prob1.2} for $k=1.$
\begin{cor}[Shi, Li and Chen\cite{Shi2024}]\label{cor1.1}
    Let $\mathcal{G}=\{G_1, G_2, \ldots, G_n\}$ be a family of balanced bipartite graphs on vertex set $[2n]$ and the bipartition $(X, Y).$ If
    $$\rho(G_i)\geq \sqrt{(n-1)n},$$ for all $i\in[n],$ then $\mathcal{G}$ contains a rainbow perfect matching unless $G_1=G_2=\dots=G_n\cong K_{n-1,n}\cup K_1.$
\end{cor}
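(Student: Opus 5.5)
This follows from Theorem~\ref{lem2.5} by specializing the number of graphs to be $n$. Take a family $\mathcal{G}=\{G_1,\ldots,G_n\}$ of balanced bipartite graphs on $[2n]$ with common bipartition $(X,Y)$, where $|X|=|Y|=n$. Each $G_i$ is then a spanning subgraph of $K_{n,n}$, so it fits the setting of Theorem~\ref{lem2.5} with $k=n$. The hypothesis $\rho(G_i)\ge\sqrt{(n-1)n}$ is exactly the threshold $\sqrt{(k-1)n}$ of that theorem when $k=n$.

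Theorem~\ref{lem2.5} then gives a rainbow matching of size $k=n$, unless $G_1=\cdots=G_n\cong K_{n-1,n}\cup (n-n+1)K_1=K_{n-1,n}\cup K_1$. Here a rainbow matching means a matching with $n$ edges taken from distinct graphs $G_i$. In the non-exceptional case, a matching with $n$ edges in a bipartite graph with parts of size $n$ saturates every vertex of $X$ and of $Y$. So it is a perfect matching on $[2n]$, and its edges come from $n$ pairwise distinct members of $\mathcal{G}$. Hence $\mathcal{G}$ contains a rainbow perfect matching, i.e.\ a rainbow $1$-factor.

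It remains to check that the parameter range of Theorem~\ref{lem2.5} allows $k=n$. The natural range is $1\le k\le n$, since a matching in $K_{n,n}$ has at most $n$ edges; I expect this is the only point needing care. If the theorem were stated only for $k<n$, I would instead rerun its argument at $k=n$. That argument compares $\rho(G_i)$ with the spectral radius $\sqrt{(n-1)n}$ of $K_{n-1,n}$, the extremal graph that has a vertex cover of size $n-1$. Since the statement is quoted with general $k$ and no upper bound, I would simply verify that $k=n$ is admissible and substitute.
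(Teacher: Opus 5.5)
Your proposal is correct and is the paper's argument: the corollary is obtained by setting $k=n$ in Theorem~\ref{lem2.5}. A rainbow matching of size $n$ among spanning subgraphs of $K_{n,n}$ on $(X,Y)$ is automatically perfect, and the exceptional graph $K_{k-1,n}\cup(n-k+1)K_1$ becomes $K_{n-1,n}\cup K_1$. Your fallback of rerunning the argument is unnecessary, since the theorem is quoted with no upper bound on $k$ and $k=n$ is the largest admissible value.
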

For general $k\geq 2,$ we provide a complete solution to Problem \ref{prob1.2}.
\begin{thm}\label{thm1.2}
    Let $k\geq 2$ be a positive integers and let $\mathcal{G}=\{G_1, G_2, \ldots , G_{kn}\}$ be a family of balanced bipartite graphs on vertex set $[2n]$ and the bipartition $(X, Y),$ where $n\geq 2k.$ If
    \begin{eqnarray*}
        \rho(G_i)\geq \rho(K_{k-1, n-1}\sqcup \widehat{K_{n-k+1, 1}})
    \end{eqnarray*}
     for every $i\in [kn],$ then $\mathcal{G}$ admits a rainbow $k$-factor unless $G_1=G_2=\cdots=G_{kn}\cong K_{k-1, n-1}\sqcup \widehat{K_{n-k+1, 1}}.$
\end{thm}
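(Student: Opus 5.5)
Throughout, write $H^*=K_{k-1, n-1}\sqcup \widehat{K_{n-k+1, 1}}$ and $\rho^*=\rho(H^*)$.

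The plan is to reduce the rainbow problem to Theorem \ref{thm1.1} by an iterative extraction argument, with a stability step to handle the extremal case. By Theorem \ref{thm1.1} (applied with $n$ here playing the role of the part size, and using $n\ge 2k$), every $G_i$ with $\rho(G_i)\ge\rho^*$ either contains a $k$-factor or is isomorphic to $H^*$. First I would dispose of connectivity. A disconnected spanning subgraph of $K_{n,n}$ has spectral radius at most that of a component, and the largest possible is about $\sqrt{(n-1)n}$ (for instance $K_{n-1,n}\cup K_1$). I would check, via an equitable-partition lower bound on $\rho^*$, that $\rho^*$ exceeds this value. The point is that $H^*$ misses only about $n-k$ edges of $K_{n,n}$, so $\rho^*\approx n-O(1)$, which is strictly larger than $\sqrt{n(n-1)}$. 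Hence each $G_i$ is connected and the dichotomy applies to each one.

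The core step is a rainbow construction. Suppose first that no $G_i$ is isomorphic to $H^*$, so every $G_i$ contains a $k$-factor. The key tool is a ``union'' lemma. Consider the bipartite multigraph structure: we want $kn$ distinct edges $e_1,\dots,e_{kn}$ with $e_i\in G_i$ forming a $k$-regular spanning subgraph. My approach would be to exploit the spectral condition more strongly than just a $k$-factor per graph. From $\rho(G_i)\ge\rho^*$ and the standard bound $\rho^2\le e(G_i)$ for bipartite graphs (or a Hong-type bound), I would derive that each $G_i$ has at least $n^2-O(n)$ edges, i.e.\ it misses few edges. Then I would build the rainbow $k$-factor greedily. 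Fix a target $k$-regular bipartite graph $F$, say a union of $k$ disjoint perfect matchings chosen from a structured family such as cyclic shifts. Then find a system of distinct representatives assigning each edge of $F$ to a distinct graph $G_i$ containing it. This is a bipartite matching between $E(F)$ and $[kn]$, and I would verify Hall's condition using that each edge of $F$ lies in most $G_i$ and each $G_i$ contains most edges of $F$. If a particular $F$ fails, I would choose $F$ adapted to the few missing edges (e.g.\ avoid the low-degree vertices), using the freedom among the many $k$-factors of a near-complete bipartite graph.

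The extremal case is handled separately. If some $G_i\cong H^*$, then $H^*$ has a vertex $y$ of degree exactly $k$: in $\widehat{K_{n-k+1,1}}$ the single vertex is joined to nothing, so its only neighbours are the $k-1$ vertices of $K_{k-1,n-1}$. Here I would argue that unless all $G_j$ coincide with this same copy of $H^*$, we can still route. In any rainbow $k$-factor the $k$ edges at the degree-deficient vertex must use edges from $k$ distinct graphs that all contain suitable edges there. If some $G_j$ differs from $H^*$, then either $G_j$ has a $k$-factor with extra flexibility, or it is a different labelled copy of $H^*$. In either case we can supply the missing edge at $y$ and complete the rest by the Hall argument above. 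When all $G_j$ equal the same $H^*$, there is no $k$-factor at all, since the structure of $H^*$ violates the Ore/Gale--Ryser condition, so no rainbow one exists either.

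The main obstacle I expect is the Hall-condition verification in the rainbow assignment. Edge counts alone may be too weak near the threshold: the missing edges of the $G_i$ could concentrate at the same vertices, which is exactly the extremal structure. I would first try to prove a stability statement: if $\rho(G_i)\ge\rho^*$, then $G_i$ has minimum degree at least $k$, and at most one vertex has degree below roughly $n-k$. With such degree information, a deficiency version of Hall's theorem applied to the auxiliary bipartite graph between $E(F)$ and $[kn]$ should go through, choosing $F$ to contain for each low-degree vertex only edges lying in many $G_i$.
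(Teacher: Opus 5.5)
\textbf{There is a genuine gap, at the step that carries the content of the theorem: the rainbow assignment.}

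Knowing from Theorem~\ref{thm1.1} that each $G_i$ separately has a $k$-factor gives you no \emph{common} $F$. Your premise that each edge of $F$ lies in most of the $G_i$ is false, in two ways.
\begin{itemize}
\item The missing edges may be shared by the whole family, e.g.\ every $G_i$ gives the same vertex $v$ the same $k$ neighbours. This forces $F$ near $v$, so no fixed $F$ can work.
\item The missing edges may sit at a vertex $v$ whose $k$ neighbours rotate through $Y$ as $i$ varies. Then every edge at $v$ lies in only $k^2$ of the $kn$ graphs, so \emph{no} $F$ has all its edges in most $G_i$.
\end{itemize}
So $F$ must be built from the whole family, and ``choose $F$ adapted to the missing edges'' only restates the problem.

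The stability lemma you plan to lean on is false in its second half. Take $10\mid n$ and let $G$ be $K_{n,n}$ minus two vertex-disjoint stars with $s=n/10$ edges each, centred at two vertices of $X$. An equitable partition shows that $\rho(G)^2$ is the largest root of $x^2-(n^2-3s)x+s(n-2)(n-2s)$. This root exceeds $n^2-0.46n$. On the other hand, $\rho^{*2}<n^2-n+k-1\le n^2-n/2-1$. So $G$ satisfies the hypothesis, yet it has two vertices of degree $0.9n$, far below $n-k$ once $n>10k$. Only the first half survives, in the form $\delta(G_i)\ge k$ unless $G_i\cong H^*$. That holds because a graph with a vertex of degree at most $k-1$ is a subgraph of a copy of $H^*$.

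Your extremal paragraph inherits the same gap. When all $G_i\cong H^*$ but as different labelled copies, possibly with many different deficient vertices, a real routing argument is needed. The paper spends all of Lemma~\ref{lem3.2} on this, using Corollary~\ref{cor1.1} and an edge-exchange procedure.

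Two smaller slips:
\begin{itemize}
\item The special vertex of $H^*$ has degree $k-1$, not $k$.
\item Theorem~\ref{thm1.1} as stated requires $k\le n/2-1$, so it does not cover $n\in\{2k,2k+1\}$.
\end{itemize}

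\textbf{The missing idea is shifting, which is exactly what makes your ``fix $F$, then assign'' plan work.}
\begin{itemize}
\item By Lemma~\ref{lem2.3}, it suffices to find a rainbow $k$-factor in the bi-shifted family $\{S(G_i)\}$. By Lemma~\ref{lem2.1}, that family still satisfies the spectral bound.
\item In a bi-shifted graph, the comparison in Lemma~\ref{lem3.3} forces every anti-diagonal edge $\{j,2n+k-j\}$ with $k+1\le j\le n-1$. The bound $\delta\ge k-1$ then forces all edges at $1,\dots,k-1$ and the edges $\{n,n+i\}$ for $i\le k-1$.
\item Consequently the explicit disjoint perfect matchings $M_i=\{\{j,n+i-j\}:j<i\}\cup\{\{j,2n+i-j\}:j\ge i\}$, $1\le i\le k$, lie in \emph{every} $S_t$, except possibly the two edges $\{k,2n\}$ and $\{n,n+k\}$.
\item If those two edges lie in two different members of the family, the single fixed $F=M_1\cup\cdots\cup M_k$ works and any bijective assignment completes it, so Hall's condition is never an issue.
\item Otherwise every $S_i\cong B_{n,k}$. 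Corollary~\ref{cor3.1} then gives $G_i\cong B_{n,k}$, and Lemma~\ref{lem3.2} finishes.
\end{itemize}
Without this normalisation, your Hall step is an unproved claim carrying essentially the whole difficulty of the theorem.
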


\section{Preliminaries}
In this section, we first give an important technique and several auxiliary results, all of which will be employed in our subsequent arguments.

In extremal set theory, the shifting technique is one of the most fundamental and widely applied tools. The shifting operation on graphs, also referred to as the Kelmans operation (see, e.g., \cite{brouwer2}), enables us to concentrate on sets with a particular structural property. Let $G$ be a graph on vertex set $[n].$ Define the $(x, y)$-shift $S_{xy}$ as $S_{xy}(G) = \{S_{xy}(e) :e \in E(G)\},$ where
\begin{equation}
\nonumber
S_{xy}(e)= \left\{
\begin{array}{cl}
(e\setminus\{y\})\cup\{x\},         &\text{if $y\in e,$ $x\notin e$ and $(e\setminus\{y\})\cup\{x\}\notin E(G)$}; \\
e,                     & \text{otherwise.}
\end{array} \right.
\end{equation}
If $S_{xy}(G) = G$ for every pair $(x, y)$ satisfying $x < y,$ then $G$ is said to be shifted, denoted by $S(G).$ By the definition of the $(x, y)$-shift, we have $|E(G)|=|E(S_{xy}(G))|.$ In other words, the $(x, y)$-shift preserves the number of edges. Moreover, it is natural to ask whether it affects the spectral radius of a graph. In 2009, Csikv\'{a}ri\cite{csikvari2009conjecture} proved that the shifting operation does not decrease the spectral radius of a graph.
\begin{lemma}[Csikv\'{a}ri\cite{csikvari2009conjecture}]\label{lem2.1}
    Let $x, y$ be two vertices of $G.$ Then $\rho(S_{xy}(G))\geq \rho(G).$
\end{lemma}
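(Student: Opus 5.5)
The statement immediately above is Lemma \ref{lem2.1}: for any two vertices $x,y$ of $G$, $\rho(S_{xy}(G))\ge\rho(G)$. I would prove it with the Rayleigh quotient and a Perron vector of $G$.

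Let $H=S_{xy}(G)$ and let $\mathbf{z}\ge 0$ be a unit Perron eigenvector of $A(G)$, so that $\rho(G)=\mathbf{z}^TA(G)\mathbf{z}=2\sum_{uv\in E(G)}z_uz_v$. By symmetry of the shift definition, I may assume $z_x\ge z_y$. This is allowed because $S_{xy}(G)$ and $S_{yx}(G)$ are isomorphic via the transposition of $x$ and $y$, which fixes the spectral radius.

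The first step is to describe which edges the shift changes. An edge $e$ is changed only if $e=yw$ with $w\ne x$, $w\notin N_G(x)$; it is then replaced by $xw$. So $H$ is obtained from $G$ by taking every $w\in N_G(y)\setminus(N_G(x)\cup\{x\})$, deleting $yw$, and adding $xw$.

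The second step is to evaluate the Rayleigh quotient on $H$. This gives
\[
\mathbf{z}^TA(H)\mathbf{z}-\mathbf{z}^TA(G)\mathbf{z}=2\sum_{w}(z_x-z_y)z_w\ge 0,
\]
where the sum runs over $w\in N_G(y)\setminus(N_G(x)\cup\{x\})$. Hence $\rho(H)\ge \mathbf{z}^TA(H)\mathbf{z}\ge\rho(G)$.

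The only point needing care is the orientation issue. If $z_y>z_x$, I would compare with $S_{yx}(G)$ and note that $S_{xy}(G)\cong S_{yx}(G)$. To check this isomorphism, write $A=N_G(x)\setminus\{y\}$ and $B=N_G(y)\setminus\{x\}$. In $S_{xy}(G)$, $x$ has neighbourhood $A\cup B$ and $y$ has $A\cap B$. In $S_{yx}(G)$ the roles are reversed, while the edge $xy$ and all other edges are unchanged. So the transposition of $x$ and $y$ maps one graph to the other.

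For the intended application to bipartite families, one should also check that shifting within one side of the bipartition keeps the graph bipartite with the same bipartition. That is immediate, since $x$ and $y$ then lie on the same side and the moved edges keep their endpoint $w$ on the opposite side. I do not expect any real obstacle here.
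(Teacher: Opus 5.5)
Your proof is correct, and it is essentially Csikv\'{a}ri's original argument; the paper gives no proof of its own and only cites that source. In that argument one takes a nonnegative Perron vector $\mathbf{z}$ of $G$, reduces to $z_x\ge z_y$ via the isomorphism $S_{xy}(G)\cong S_{yx}(G)$ given by transposing $x$ and $y$, and notes that the Rayleigh quotient increases by $2(z_x-z_y)\sum_{w} z_w\ge 0$. Your check of that isomorphism (neighbourhoods $A\cup B$ and $A\cap B$, with the edge $xy$ untouched) is the one point that needed care, and it is correct.
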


Guo et al.\cite{guo2023spectral} determined changes of the spectral radius after $(x, y)$-shift operation in connected graphs.
\begin{lemma}[Guo et al.\cite{guo2023spectral}]\label{lem2.2}
    Let $G$ be a connected graph on vertex set $[n].$ Let $x, y$ be two vertices of $G.$ Then $\rho(S_{xy}(G))> \rho(G)$ unless $G\cong S_{xy}(G).$
\end{lemma}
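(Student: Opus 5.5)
The approach is the Rayleigh quotient with the Perron vector of $G$. The first step is to note that the roles of $x$ and $y$ can be exchanged. Write $N_x=N_G(x)\setminus\{y\}$ and $N_y=N_G(y)\setminus\{x\}$. By the definition of $S_{xy}$, in $S_{xy}(G)$:
\begin{itemize}
\item $x$ is adjacent to $N_x\cup N_y$;
\item $y$ is adjacent to $N_x\cap N_y$;
\item the edge $xy$, if present, is kept;
\item all other edges are unchanged.
\end{itemize}
This description is symmetric in $x$ and $y$. Hence $S_{yx}(G)$ is obtained from $S_{xy}(G)$ by swapping the labels $x$ and $y$, so $S_{xy}(G)\cong S_{yx}(G)$ and the two have the same spectrum. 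Let $\mathbf{z}$ be the unit Perron vector of $A(G)$. It is strictly positive because $G$ is connected. After possibly replacing $S_{xy}$ by $S_{yx}$, we may assume $z_x\ge z_y$.

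Let $M=N_y\setminus N_x$, the set of vertices $w$ whose edge $yw$ is moved to $xw$. Write $G'=S_{xy}(G)$. Then
\[
\mathbf{z}^{T}A(G')\mathbf{z}-\mathbf{z}^{T}A(G)\mathbf{z}=2(z_x-z_y)\sum_{w\in M}z_w\ \ge 0 .
\]
By the Rayleigh principle (valid even if $G'$ is disconnected), $\rho(G')\ge \mathbf{z}^TA(G')\mathbf{z}\ge\rho(G)$. This recovers Lemma~\ref{lem2.1}.

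For strictness, suppose $\rho(G')=\rho(G)$. Then every inequality above is an equality, so $\mathbf{z}$ attains the maximum of the Rayleigh quotient of the symmetric matrix $A(G')$. Hence $\mathbf{z}$ is an eigenvector: $A(G')\mathbf{z}=\rho(G)\mathbf{z}$. Now compare the eigen-equations of $G$ and $G'$ at the vertex $y$:
\[
(A(G')\mathbf{z})_y=(A(G)\mathbf{z})_y-\sum_{w\in M}z_w .
\]
The left side equals $\rho(G)z_y$, and so does $(A(G)\mathbf{z})_y$. Therefore $\sum_{w\in M}z_w=0$. Since every $z_w>0$, this forces $M=\emptyset$, so no edge is shifted and $G'=G$, giving $G\cong S_{xy}(G)$. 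In the case where we swapped to $S_{yx}$, the same argument shows $S_{yx}(G)=G$, and then $S_{xy}(G)\cong S_{yx}(G)=G$, which is again the exceptional case.

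The main point requiring care is the reduction to $z_x\ge z_y$ via the isomorphism $S_{xy}(G)\cong S_{yx}(G)$. Once that is in place, the strictness step is routine: equality in the Rayleigh quotient plus positivity of the Perron vector does the rest.
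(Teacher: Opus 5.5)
Your proof is correct and complete. The symmetric description of $S_{xy}(G)$ justifies $S_{xy}(G)\cong S_{yx}(G)$, because the transposition $(x\,y)$ fixes $N_x\cup N_y$ pointwise. The Rayleigh-quotient computation then gives $\rho(S_{xy}(G))\ge\rho(G)$. In the equality case, the eigen-equation at $y$ together with positivity of the Perron vector forces $N_y\setminus N_x=\emptyset$, and this works even when $z_x=z_y$.

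The paper gives no proof of its own here; it imports the lemma from Guo et al. Your argument is the standard one: Csikv\'{a}ri's Perron-vector proof of Lemma~\ref{lem2.1}, upgraded to a strict inequality using connectivity.
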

For a bipartite graph, to ensure that the graph remains bipartite after the $(x, y)$-shift, we must choose the vertices $x$ and $y$ from the same part. Let $G$ be a bipartite graph on vertex set $[n].$ If $S_{xy}(G) = G$ for every pair $(x, y)$ in the same part of $G$ satisfying $x < y,$ then $G$ is said to be bi-shifted. The following observation shows a basic property of $(x, y)$-shift for a bipartite graph $G.$

\begin{observation}\label{obs::1}
Let $G$ be a bipartite graph on vertex set $[n]$ with the partition $(X, Y).$  If $G$ is bi-shifted, then for any $\{x_1, x_2\}\subseteq X$ and $\{y_1, y_2\} \subseteq Y$ such that $x_1 \leq x_2$ and $y_1\leq y_2,$ $\{x_2, y_2\} \in E(G)$ implies $\{x_1, y_1\} \in E(G).$
\end{observation}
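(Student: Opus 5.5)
The argument applies the definition of bi-shiftedness twice, moving one endpoint at a time: first replace $x_2$ by $x_1$, then $y_2$ by $y_1$. Each step uses only that a shift between two vertices of the same part leaves $G$ unchanged.

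\emph{Step 1 (moving the $X$-endpoint).} We show that $\{x_1,y_2\}\in E(G)$. If $x_1=x_2$ there is nothing to prove, so assume $x_1<x_2$. Since $x_1,x_2\in X$ lie in the same part and $G$ is bi-shifted, we have $S_{x_1x_2}(G)=G$. Consider the edge $e=\{x_2,y_2\}$; it contains $x_2$ and does not contain $x_1$. Suppose $\{x_1,y_2\}=(e\setminus\{x_2\})\cup\{x_1\}$ were not an edge of $G$. Then the definition of the shift gives $S_{x_1x_2}(e)=\{x_1,y_2\}$. Hence $\{x_1,y_2\}\in S_{x_1x_2}(G)=G$, a contradiction. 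Therefore $\{x_1,y_2\}\in E(G)$.

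\emph{Step 2 (moving the $Y$-endpoint).} We show that $\{x_1,y_1\}\in E(G)$. If $y_1=y_2$ we are done by Step 1, so assume $y_1<y_2$. Since $y_1,y_2\in Y$ lie in the same part, $S_{y_1y_2}(G)=G$. Apply the same reasoning to the edge $\{x_1,y_2\}$, which contains $y_2$ but not $y_1$. If $\{x_1,y_1\}\notin E(G)$, the shift would send $\{x_1,y_2\}$ to $\{x_1,y_1\}$, so $\{x_1,y_1\}$ would belong to $S_{y_1y_2}(G)=G$, again a contradiction. Hence $\{x_1,y_1\}\in E(G)$, as claimed.

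There is no real obstacle here. The only point needing care is the direction of the shift: $S_{xy}$ with $x<y$ replaces the larger vertex $y$ by the smaller vertex $x$. This is why the hypotheses $x_1\le x_2$ and $y_1\le y_2$ are exactly what is needed to invoke $S_{x_1x_2}$ and $S_{y_1y_2}$.
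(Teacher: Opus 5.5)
Your proof is correct: the paper states this observation without proof, treating it as immediate from the definition of bi-shifted, and your two-step argument is exactly the routine verification left implicit. You first apply $S_{x_1x_2}(G)=G$ to the edge $\{x_2,y_2\}$ to get $\{x_1,y_2\}\in E(G)$, then apply $S_{y_1y_2}(G)=G$ to the edge $\{x_1,y_2\}$. The only unstated detail is that $x_1\notin\{x_2,y_2\}$ and $y_1\notin\{x_1,y_2\}$; this follows from $X\cap Y=\emptyset$.
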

Iterating the $(x, y)$-shift over all pairs $(x, y)$ with $x < y$ eventually yields a shifted graph (see \cite{frankl1987} and \cite{frankl1995shifting}). For a bipartite graph $G$, it can be verified that by repeatedly applying the shifting operation in the same part of $G$, one can obtain a bi-shifted graph. Let $S(\mathcal{G})=\{S(G_1), S(G_2), \ldots , S(G_n)\}.$

\begin{lemma}[Zhang and zhang\cite{zhangzhang2025}]\label{lem2.3}
   Let $k$ and $n$ be two positive integers such that $kn$ is even, and let $\mathcal{G}=\{G_1, G_2, \ldots, G_{\frac{kn}{2}}\}$ be a family of graphs on vertex set $[n].$ If $S(\mathcal{G})$ admits a rainbow $k$-factor, then so does $\mathcal{G}.$
\end{lemma}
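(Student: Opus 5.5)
The plan is to reduce Lemma~\ref{lem2.3} to a single shift applied simultaneously to every graph of the family, and then iterate. For a pair $x<y$, write $S_{xy}(\mathcal{G})=\{S_{xy}(G_1),\ldots,S_{xy}(G_{kn/2})\}$. Since $S(\mathcal{G})$ is obtained from $\mathcal{G}$ by a finite sequence of such simultaneous shifts, induction on the number of shifts reduces the lemma to the following claim: if $S_{xy}(\mathcal{G})$ admits a rainbow $k$-factor, then so does $\mathcal{G}$. The argument will only use that $x,y$ are two fixed vertices, so it applies verbatim to the bi-shifting used later, where $x,y$ lie in the same part.

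\textbf{Classifying the edges of $F$.} Let $F$ be a rainbow $k$-factor of $S_{xy}(\mathcal{G})$, and let $e_i\in E(S_{xy}(G_i))$ be the edge of colour $i$. Call $e_i$ \emph{bad} if $e_i\notin E(G_i)$. From the definition of $S_{xy}$, a bad edge has the form $e_i=xz$ with $yz\in E(G_i)$ and $xz\notin E(G_i)$. Every other edge of $F$ lies in its own $G_i$. In addition, every edge $yw=e_j$ of $F$ with $w\neq x$ satisfies both $yw\in E(G_j)$ and $xw\in E(G_j)$: if $xw\notin E(G_j)$, then $yw$ would have been moved to $xw$, and so would not lie in $S_{xy}(G_j)$. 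The edge $xy$, if present, is never moved.

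Let $C=N_F(x)\cap N_F(y)$, let $P=N_F(x)\setminus(N_F(y)\cup\{y\})$, and let $Q=N_F(y)\setminus(N_F(x)\cup\{x\})$. Since $d_F(x)=d_F(y)=k$, we have $|P|=|Q|$. I will repair each bad edge $xz$ according to where $z$ lies.

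\textbf{Repairs.} Both repairs use the fact about edges at $y$ recorded above.
\begin{itemize}
\item If $z\in C$, let $j$ be the colour of $yz$. Then $xz,yz\in E(G_j)$ and $yz\in E(G_i)$, so swapping the colours $i$ and $j$ on the pair $xz,yz$ makes both edges good. The edge set does not change.
\item If $z\in P$, choose a distinct $w\in Q$ for each bad $z\in P$; this is possible since the number of bad $z\in P$ is at most $|P|=|Q|$. If $yw$ has colour $j$, replace $xz$ (colour $i$) and $yw$ (colour $j$) by $yz$ (colour $i$) and $xw$ (colour $j$). This is valid because $yz\in E(G_i)$ and $xw\in E(G_j)$.
\end{itemize}
The two repairs touch disjoint sets of edges. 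The second repair preserves all degrees: $x$ loses $z$ and gains $w$, $y$ loses $w$ and gains $z$, and $z,w$ keep their degrees. It also creates no multiple edges, since $z\notin N_F(y)$ and $w\notin N_F(x)$. The result is therefore a $k$-factor using each colour exactly once, with every edge in its graph, i.e.\ a rainbow $k$-factor of $\mathcal{G}$.

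\textbf{Main obstacle.} The delicate step is checking that the recolourings and swaps never conflict. In particular, for a swapped edge $yw$ the new edge $xw$ must be good for colour $j$, which is exactly why the ``unshifted edges at $y$ come with their $x$-copy'' observation is essential. The ordering $x<y$ plays no role beyond fixing the direction of the shift.
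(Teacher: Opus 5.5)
Your argument is correct. The paper gives no proof of this lemma; it quotes it from Zhang and Zhang, so there is no in-paper argument to compare against. What you give is the natural $k$-factor analogue of the classical swap argument for rainbow matchings under shifting. The $C$/$P$/$Q$ bookkeeping is exactly what is needed to extend it: $|P|=|Q|$ follows from $d_F(x)=d_F(y)=k$ whether or not $xy\in E(F)$, and the two repairs act on disjoint edge sets.

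Two remarks on what your route buys and what it relies on.

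First, the paper actually applies the lemma to $kn$ bipartite graphs on $[2n]$ that are only bi-shifted, i.e.\ shifted within a part. That is not literally the cited statement. Your observation that the argument uses nothing about the pair $x,y$ beyond fixing the direction of the shift covers this case directly.

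Second, the reduction genuinely needs the shifts to be simultaneous. Your key fact, that an edge $yw$ of colour $j$ forces $xw\in E(G_j)$, fails if $G_j$ itself was not shifted by $S_{xy}$. So $S(\mathcal{G})$ must be read as the outcome of one common sequence of shifts applied to every member. Such a sequence exists: cycle through all admissible pairs $x<y$. Each nontrivial shift strictly decreases $\sum_{uv\in E(G_i)}(u+v)$ for the graph it changes, so the process stabilizes with every member shifted (or bi-shifted). This is the standard convention, and you adopt it explicitly. It is still worth stating, because the paper writes $S_i=S(G_i)$ as though each graph were shifted independently.
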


Let $A$ be a symmetric real matrix whose rows and columns are indexed by $X=[n].$ Given a partition $\Pi:$ $X=X_1\cup X_2\cup \dots \cup X_m,$ the matrix $A$ is denoted by
\begin{eqnarray*}
A=\begin{bmatrix}
   A_{11}&\dots&A_{1m}\\\vdots &\ddots&\vdots\\A_{m1}&\dots&A_{mm}
\end{bmatrix},
\end{eqnarray*}
where $A_{ij}$ is the submatrix of $A$ with respect to rows in $X_i$ and columns in $X_j.$ Let $B_{\Pi}$ be a matrix of order $m$ whose $(i,j)$-entry equals the average row sum of $A_{ij}.$ Then $B_{\Pi}$ is called a quotient matrix of $A$ corresponding to this partition. If the row sum of each block $A_{ij}$ is constant, then the partition $\Pi$ is equitable.
\begin{lemma}[Brouwer and Haemers\cite{brouwer2}, Godsil and Royle\cite{Godsil}]\label{lem2.4} Let $A$ be a real symmetric matrix and $\rho(A)$ be its largest eigenvalue. Let $B_{\Pi}$ be an equitable quotient matrix of $A$. Then the eigenvalues of $B_{\Pi}$ are also eigenvalues of $A$. Furthermore, if $A$ is nonnegative and irreducible, then $\rho(A)=\rho(B_{\Pi}).$
\end{lemma}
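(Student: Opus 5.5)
The plan is to encode the partition $\Pi$ by its characteristic matrix and turn equitability into a single matrix identity. Let $P$ be the $n\times m$ matrix with $P_{x,i}=1$ if $x\in X_i$ and $P_{x,i}=0$ otherwise. Since the $X_i$ are nonempty and pairwise disjoint, the columns of $P$ have disjoint nonempty supports, so $P$ has full column rank $m$.

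The first step is to observe that equitability is exactly the identity
\[
AP=PB_{\Pi}.
\]
To check it, look at the $(x,j)$-entry for $x\in X_i$. On the left it is $\sum_{y\in X_j}a_{xy}$, the row sum of row $x$ in the block $A_{ij}$. On the right it is $(B_\Pi)_{ij}$. These agree precisely because the row sums of each block $A_{ij}$ are constant, so this average equals every individual row sum.

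For the first assertion, let $B_\Pi v=\lambda v$ with $v\neq 0$. Then
\[
A(Pv)=PB_\Pi v=\lambda Pv,
\]
and $Pv\neq 0$ because $P$ has full column rank. Hence $\lambda$ is an eigenvalue of $A$, with eigenvector $Pv$, which is the vector $v$ blown up to be constant on each part. Note that $B_\Pi$ need not be symmetric, so I will not rely on symmetry of $B_\Pi$ anywhere.

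For the second assertion, assume $A$ is nonnegative and irreducible. By Perron--Frobenius, $A$ has a positive eigenvector $u$ for $\rho(A)$. Transposing the identity above and using $A^{T}=A$ gives $P^{T}A=B_\Pi^{T}P^{T}$. Therefore
\[
B_\Pi^{T}(P^{T}u)=P^{T}Au=\rho(A)\,P^{T}u .
\]
The vector $P^{T}u$ has $i$-th entry $\sum_{x\in X_i}u_x>0$, so $B_\Pi^{T}$ has a positive eigenvector with eigenvalue $\rho(A)$. Also $B_\Pi^{T}$ is nonnegative, being built from averages of nonnegative entries.

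The only delicate step is concluding from this that $\rho(A)=\rho(B_\Pi)$. For a nonnegative matrix $M$ with a positive eigenvector $w$, $Mw=\mu w$, the eigenvalue $\mu$ equals $\rho(M)$. To see this, take the positive left Perron vector $z$ of $M$ when $M$ is irreducible; then $\rho(M)z^{T}w=z^{T}Mw=\mu z^{T}w$, and $z^{T}w>0$. In general, use the bound $\min_i (Mw)_i/w_i\le\rho(M)\le\max_i (Mw)_i/w_i$, whose two sides both equal $\mu$ here. Irreducibility of $B_\Pi$ does in fact hold, since the quotient digraph of a strongly connected digraph is strongly connected, but the min–max bound makes it unnecessary.

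Applying this to $M=B_\Pi^{T}$ and $w=P^{T}u$ gives $\rho(A)=\rho(B_\Pi^{T})=\rho(B_\Pi)$. This argument also shows that $\rho(B_\Pi)$ is the largest eigenvalue of $B_\Pi$, consistent with the first assertion.
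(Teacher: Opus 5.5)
Your proof is correct, and the second part takes a different route from the usual one. The paper gives no proof of this lemma; it only cites Brouwer--Haemers and Godsil--Royle. Your first part (the characteristic matrix $P$, the identity $AP=PB_{\Pi}$, and the lift $v\mapsto Pv$) is exactly the standard argument.

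For $\rho(A)=\rho(B_{\Pi})$, the usual route goes \emph{upward}. Since $B_{\Pi}$ is nonnegative, it has a nonzero nonnegative eigenvector $v$ for $\rho(B_{\Pi})$. Then $Pv$ is a nonnegative eigenvector of $A$. An irreducible nonnegative matrix has nonnegative eigenvectors only for its Perron root, so $\rho(B_{\Pi})=\rho(A)$.

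You go \emph{downward} instead. You compress the positive Perron vector $u$ of $A$ via $P^{T}$ into a positive eigenvector of $B_{\Pi}^{T}$, then apply the Collatz--Wielandt bounds. Each route has a small advantage:
\begin{itemize}
\item The upward route never uses $A=A^{T}$. You use symmetry to transpose $AP=PB_{\Pi}$, which is harmless here. Taking $u$ to be a positive \emph{left} Perron vector of $A$ would make your argument symmetry-free as well.
\item Your route replaces the uniqueness statement of Perron--Frobenius for irreducible matrices with an inequality valid for every nonnegative matrix. It also exhibits $\rho(A)$ directly as an eigenvalue of $B_{\Pi}$, with the explicit positive left eigenvector $P^{T}u$.
\end{itemize}
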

\section{Proof of Theorem \ref{thm1.2}}
For convenience, let $B_{n,k}=K_{k-1, n-1}\sqcup \widehat{K_{n-k+1, 1}}.$ Before presenting our proof, we show some crucial lemmas.
\begin{lemma}\label{lem3.1}
Let $k\geq 2$ be a positive integer and let $G$ be a balanced bipartite graph on vertex set $[2n]$ and the bipartition $(X, Y),$ where $n\geq 2k.$ Let $\{u, v\} \subseteq X$ or $\{u, v\} \subseteq Y$ with $u<v.$ If $\rho (G) \geq \rho(B_{n,k})$ and $S_{uv}(G)\cong B_{n,k},$ then $G\cong B_{n,k}.$
\end{lemma}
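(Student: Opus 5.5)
The plan is to combine the edge-count invariance of the shift with Lemma \ref{lem2.1} and Lemma \ref{lem2.2}; the only real work is checking that $G$ is connected.

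First I make the structure of $B_{n,k}$ explicit. Write $X=X_1\cup X_2$ and $Y=Y_1\cup Y_2$, where $|X_1|=k-1$, $|Y_1|=n-1$, $|X_2|=n-k+1$ and $Y_2=\{y_0\}$. Since $\widehat{K_{n-k+1,1}}$ has no edges, the edges of $B_{n,k}$ are exactly those of $K_{n,n}$ except the $n-k+1$ edges between $X_2$ and $y_0$. Hence
$$|E(B_{n,k})|=n^2-n+k-1 .$$
An $(u,v)$-shift with $u,v$ in the same part preserves the number of edges and keeps the graph bipartite with the same bipartition. So $S_{uv}(G)\cong B_{n,k}$ gives $|E(G)|=n^2-n+k-1$.

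Next comes the spectral comparison. By Lemma \ref{lem2.1}, $\rho(G)\le \rho(S_{uv}(G))=\rho(B_{n,k})$. Together with the hypothesis $\rho(G)\ge\rho(B_{n,k})$, this forces
$$\rho(G)=\rho(S_{uv}(G)).$$
If $G$ is connected, Lemma \ref{lem2.2} says that $\rho(S_{uv}(G))>\rho(G)$ unless $G\cong S_{uv}(G)$. Strict inequality is excluded, so $G\cong S_{uv}(G)\cong B_{n,k}$, as required.

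The remaining step, and the only one needing any care, is to show that $G$ is connected. I will use edge counting. Suppose $G$ is a disconnected spanning subgraph of $K_{n,n}$. Then some component $C$ meets $X$ in $a$ vertices and $Y$ in $b$ vertices, and the rest of the graph has $n-a$ and $n-b$ vertices in $X$ and $Y$. Since $G$ is disconnected, $(a,b)\notin\{(0,0),(n,n)\}$. There are no edges between $C$ and the rest, so
$$|E(G)|\le ab+(n-a)(n-b)=n^2-n(a+b)+2ab .$$
For $0\le a,b\le n$ with $(a,b)\ne(0,0),(n,n)$, this expression is at most $n(n-1)$. It is bilinear, so its maximum is attained at a corner of the box, and the admissible corners $(n,0)$ and $(0,n)$ give $0$. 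The two excluded corners are handled by checking their neighbours, e.g. $(1,0)$ and $(n-1,n)$, which give $n(n-1)$; I would verify this small optimization directly.

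Since $k\ge2$, we have $n^2-n+k-1>n^2-n$, which contradicts this bound. So $G$ is connected, Lemma \ref{lem2.2} applies, and the proof is complete.
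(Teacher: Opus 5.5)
Your proof is correct. Its skeleton matches the paper's: Lemma~\ref{lem2.1} together with the hypothesis forces $\rho(G)=\rho(S_{uv}(G))$, and Lemma~\ref{lem2.2} then gives $G\cong S_{uv}(G)$ once $G$ is known to be connected.

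Where you differ is in how connectivity is obtained. The paper argues structurally. It takes the unique vertex $w$ of degree $k-1$ in $S_{uv}(G)$, assumes $w\in X$, and splits into cases. When $u,v\in X\setminus\{w\}$ or $u,v\in Y$, it reads off $G$ directly from the shift, with no spectral input at all. Only when $v=w$ does it observe that $u,v$ have positive degree in $G$, so that $G$ is connected, and then apply Lemma~\ref{lem2.2}.

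You replace this case analysis with one global count: $|E(G)|=n^2-n+k-1>n(n-1)$, which is the maximum size of a disconnected spanning subgraph of $K_{n,n}$. What your route buys:
\begin{itemize}
\item It is uniform in the position of $u,v$.
\item It uses only the edge count of $B_{n,k}$ and $k\ge 2$, so it applies verbatim to any target graph with more than $n(n-1)$ edges.
\item It sidesteps details the paper leaves implicit: that $G-\{u,v\}=S_{uv}(G)-\{u,v\}$ is connected, and that for $u,v\in Y$ one in general gets only $G\cong S_{uv}(G)$ rather than equality.
\end{itemize}
The paper's route, in return, shows that in most configurations $G$ is pinned down without any spectral argument.

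One small tightening. The remark ``bilinear, so the maximum is at a corner'' does not by itself cover the box with two corners removed. The bound does follow from
$n^2-ab-(n-a)(n-b)=a(n-b)+b(n-a)$:
\begin{itemize}
\item When $a\in\{0,n\}$ this equals $bn$ or $n(n-b)$, which is at least $n$ because $(a,b)\notin\{(0,0),(n,n)\}$.
\item When $1\le a\le n-1$ it is linear in $b$, hence at least $\min\{an,\,n(n-a)\}\ge n$.
\end{itemize}
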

\begin{proof}
Note that $S_{uv}(G)\cong B_{n,k}.$ Let $w$ be the vertex of degree $k-1$ in $S_{uv}(G).$ Without loss of generality, we assume that $w\in X.$ Denote $X_1=X\setminus\{w\}.$ If $\{u, v\} \subseteq X_1,$ then by the definition of $(u, v)$-shift, we have $G=S_{uv}(G)\cong B_{n,k}.$ If $u=w$ or $v=w,$ then it follows from $u<v$ that $u\in X_1$ and $v=w\in X.$ We claim that $d_G(u)>0$ and $d_G(v)>0.$ In fact, if $d_G(u)=0$ or $d_G(v)=0,$ then $d_{S_{uv}(G)}(u)=0$ or $d_{S_{uv}(G)}(v)=0,$ contradicting $S_{uv}(G)\cong B_{n,k}.$ Then $d_G(u)>0$ and $d_G(v)>0,$ and hence $G$ is connected. Note that
\begin{eqnarray*}
\rho(G)\geq \rho(B_{n,k})=\rho(S_{uv}(G)).
\end{eqnarray*}
By Lemma \ref{lem2.2}, we have $\rho(G)=\rho(S_{uv}(G))$ and $G\cong S_{uv}(G)\cong B_{n,k}.$ If $\{u, v\} \subseteq Y,$ then we can obtain that $G= S_{uv}(G)\cong B_{n,k}.$
\end{proof}
By Lemmas \ref{lem2.1} and \ref{lem3.1}, we immediately obtain the following result.
\begin{cor}\label{cor3.1}
Let $k\geq 2$ be a positive integer and let $G$ be a balanced bipartite graph on vertex set $[2n]$ and the bipartition $(X, Y),$ where $n\geq 2k.$ If $\rho(G) \geq \rho(B_{n,k})$ and $S(G)\cong B_{n,k},$ then $G\cong B_{n,k}.$
\end{cor}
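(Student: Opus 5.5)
The plan is to argue by induction along the sequence of shifts that produces $S(G)$ from $G$. By the discussion preceding Lemma \ref{lem2.3}, $S(G)$ is obtained from $G$ by finitely many $(u,v)$-shifts, each with $u<v$ and $\{u,v\}$ contained in $X$ or in $Y$. So I would fix such a sequence
$$G=H_0,\ H_1=S_{u_1v_1}(H_0),\ \ldots,\ H_t=S_{u_tv_t}(H_{t-1})=S(G).$$
Each $H_j$ is again a balanced bipartite graph on $[2n]$ with the same bipartition $(X,Y)$, because every shift uses two vertices from the same part.

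The first step is to push the spectral hypothesis forward along the chain. By Lemma \ref{lem2.1},
$$\rho(H_j)\geq \rho(H_{j-1})\geq \cdots\geq \rho(G)\geq \rho(B_{n,k})$$
for every $j$. Thus each $H_j$ satisfies the hypothesis $\rho(H_j)\ge\rho(B_{n,k})$ of Lemma \ref{lem3.1}.

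The second step is a backward induction on $j$ from $t$ down to $0$, proving $H_j\cong B_{n,k}$. The base case is the assumption $H_t=S(G)\cong B_{n,k}$. For the inductive step, suppose $H_j\cong B_{n,k}$. Then $H_{j-1}$ is a balanced bipartite graph on $[2n]$ with $\rho(H_{j-1})\ge \rho(B_{n,k})$ and $S_{u_jv_j}(H_{j-1})=H_j\cong B_{n,k}$, where $u_j<v_j$ lie in the same part. Lemma \ref{lem3.1} then gives $H_{j-1}\cong B_{n,k}$. At $j=0$ this yields $G\cong B_{n,k}$.

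I do not expect a real obstacle. The one point to check is that Lemma \ref{lem3.1} is stated for an arbitrary graph isomorphic to $B_{n,k}$, not only for a specific labelled copy, so it can be applied at every stage even though the labelling of $H_j$ may change. Its proof only uses the vertex $w$ of degree $k-1$ in $S_{uv}(G)$, so it works for any labelled isomorphic copy.
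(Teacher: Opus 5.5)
Your proposal is correct and is the paper's argument spelled out. The paper derives this corollary in one line "by Lemmas~\ref{lem2.1} and~\ref{lem3.1}", and your two steps unpack exactly that sentence: Lemma~\ref{lem2.1} carries $\rho(H_j)\ge\rho(B_{n,k})$ forward along the shift sequence, and Lemma~\ref{lem3.1}, which is stated up to isomorphism, then gives backward induction from $S(G)$ to $G$.
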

Next we prove a technical lemma which is very important to our main result.
\begin{lemma}\label{lem3.2}
   Let $ k\geq 2$ be a positive integer and let $\mathcal{G}=\{G_1, G_2, \ldots , G_{kn}\}$ be a family of balanced bipartite graphs on vertex set $[2n]$ and the bipartition $(X, Y),$ where $n\geq 2k.$ If $G_i\cong B_{n,k}$ for every $i\in [kn],$ and there exist $t_1, t_2\in [kn]$ such that $G_{t_1} \neq G_{t_2},$ then $\mathcal{G}$ admits a rainbow $k$-factor.
\end{lemma}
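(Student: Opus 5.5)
The plan is to reduce the statement to a single Hall-type condition for a fixed $k$-regular bipartite graph $F$, and then choose $F$ so that the condition holds.

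\textbf{Structure of the graphs.} In $B_{n,k}$ the side $X_1$ (with $|X_1|=k-1$) is complete to $Y$, and $X_2$ is complete to $Y_1$. The single vertex of $Y_2$ is adjacent only to $X_1$. So $B_{n,k}$ is $K_{n,n}$ with the $n-k+1$ edges of a star deleted, the star being centred at a vertex of degree $k-1$. Hence every $G_i$ equals $K_{n,n}-M_i$, where $M_i$ is a star with centre $w_i$ and leaf set $S_i$ on the opposite side, $|S_i|=n-k+1$. The hypothesis $G_{t_1}\neq G_{t_2}$ says $M_{t_1}\neq M_{t_2}$.

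\textbf{Reduction to Hall's condition.} Fix any $k$-regular spanning subgraph $F$ of $K_{n,n}$; it has exactly $kn$ edges. Form the auxiliary bipartite graph between $E(F)$ and $[kn]$, joining $e$ to $i$ whenever $e\in E(G_i)$, i.e.\ $e\notin M_i$. A rainbow $k$-factor is exactly a perfect matching of this auxiliary graph, so by Hall's theorem it suffices to check that every $T\subseteq E(F)$ sees at least $|T|$ indices.

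The only indices $i$ not adjacent to any edge of $T$ are those with $T\subseteq M_i$. Since $M_i$ is a star, this is possible only if all edges of $T$ share a common vertex $v$ of $F$ with $w_i=v$ (or $|T|=1$). In particular $|T|\le k$ in that case, and all larger $T$ see all $kn$ indices. So Hall's condition reduces to the following: for every vertex $v$ and every nonempty set $T$ of $F$-edges at $v$,
$$\#\{i:\ T\subseteq M_i\}\le kn-|T|.$$
It suffices that every edge $e$ of $F$ lies in at most $kn-k$ of the stars $M_i$. Call an edge \emph{heavy} if it lies in more than $kn-k$ stars, and let $H$ be the set of heavy edges. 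The generic case is then settled by finding a $k$-factor of $K_{n,n}-H$.

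\textbf{Analysing heavy edges.} An edge $xy$ is heavy only if more than $kn-k\ (>kn/2$, using $n\ge 2k)$ of the centres $w_i$ lie in $\{x,y\}$. Counting $\sum_i|M_i|=kn(n-k+1)$ shows $|H|$ is at most roughly $n-k+1$, and $H$ is concentrated around one or two popular centres. If $H$ is contained in a star with at most $n-k$ edges, or is otherwise sparse (maximum degree at most $n-k$ together with few edges), then $K_{n,n}-H$ has a $k$-factor. This follows from Hall's theorem or the Gale--Ryser criterion, since every vertex keeps degree at least $k$ and $n\ge 2k$. In this case we take $F$ to be that $k$-factor.

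\textbf{Main obstacle.} The delicate case is when $H$ is a full star $M$ of size $n-k+1$ at a vertex $w$. This means almost all $G_i$ equal the same copy of $B_{n,k}$, and then $w$ has only $k-1$ non-heavy neighbours. Here we use $G_{t_1}\neq G_{t_2}$: some index $j$ has $M_j\neq M$, so some edge $e_0=wy_0\in M$ lies in $G_j$ and is missed by at most $kn-1$ graphs. We build $F$ from the $k-1$ non-heavy edges at $w$, plus $e_0$, plus a $k$-factor on the rest avoiding $H$, which exists again by Hall's theorem since $n\ge 2k$.

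We then recheck the refined Hall inequality at $w$ directly. Any $T$ containing a non-heavy edge has at most $kn-k$ indices missing all of $T$. For $T=\{e_0\}$ at most $kn-1$ indices miss it. At $y_0$ the edges other than $e_0$ are non-heavy, so every $T$ at $y_0$ with $|T|\ge 2$ contains a non-heavy edge. Getting this bookkeeping exactly right, in particular when the $G_i$ have two different centres $w_{t_1}\neq w_{t_2}$ sharing a heavy edge, is the step I expect to require the most care.
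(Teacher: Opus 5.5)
Your plan is correct, and it takes a genuinely different route from the paper. The paper argues constructively. It splits on the number $p$ of distinct degree-$(k-1)$ vertices and builds rainbow $k_i$-factors inside the large classes $\mathcal{G}_i$ of graphs sharing the same deficient vertex. It covers the leftover graphs in blocks of $n$ by applying the Shi--Li--Chen rainbow perfect matching result (Corollary~\ref{cor1.1}) to the graphs $G_i'\cong K_{n,n-1}\cup K_1$. Finally it removes repeated edges by case-by-case local edge swaps justified by edge counts. You instead fix the underlying $k$-factor $F$ first and treat the colour assignment as a single perfect-matching problem between $E(F)$ and $[kn]$. Because every $M_i$ is a star, Hall's condition collapses to a multiplicity bound on single edges. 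The whole lemma therefore reduces to choosing $F$ with no heavy edge, or with exactly one heavy edge $e_0$ that some $G_j$ contains. This removes the case split on $p$, the appeal to the spectral rainbow-matching theorem, and all swap bookkeeping. Once tightened as below, your argument also uses the size hypothesis only through $kn-k+1>kn/2$, which holds for every $n\ge 2$. The paper's approach, in exchange, tells you explicitly which graph supplies which edge.

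Two steps should be made precise.

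First, replace ``concentrated around one or two popular centres'' and the ``otherwise sparse'' alternative by the fact your $>kn/2$ remark already yields. Two vertex-disjoint edges never lie in a common star, so their multiplicities sum to at most $kn$, whereas each heavy edge has multiplicity at least $kn-k+1>kn/2$. Hence heavy edges pairwise intersect. Since $K_{n,n}$ is triangle-free, $H$ is a star, and doing your double count exactly gives $|H|\le n-k+1$. So only your two cases occur. Your closing worry is also moot: the Hall check in the full-star case uses only that $e_0$ is the unique heavy edge of $F$ and that $e_0\notin M_j$, never where the centres are.

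Second, minimum degree at least $k$ does not by itself give a $k$-factor. A clean way to produce $F$ is the cyclic $1$-factorization of $K_{n,n}$. Write $X=\{x_0,\ldots,x_{n-1}\}$ and $Y=\{y_0,\ldots,y_{n-1}\}$, and let $P_c=\{x_ay_{a+c}: 0\le a\le n-1\}$ with indices mod $n$. Each $P_c$ has exactly one edge at the centre $w$ of $H$, so exactly $|H|$ of the $P_c$ meet $H$. If $|H|\le n-k$, take $k$ factors avoiding $H$. If $|H|=n-k+1$, take the $k-1$ factors avoiding $H$ together with the factor through $e_0$, which meets $H$ only in $e_0$.
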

\begin{proof}
Since $G_i\cong B_{n,k},$ there exists a vertex of degree $k-1,$ and every other vertex has degree either $n-1$ or $n$ in $G_i$ for each $i\in [kn].$ Suppose that $G_1, G_2, \ldots , G_{kn}$ have $p$ distinct vertices with degree $k-1,$ denoted by $u_1,..., u_p.$ Now we distinguish our proof into the following two cases.
\begin{case}\label{case1}
    $p=1.$
\end{case}
Without loss of generality, we assume that $G_1\neq G_2.$ Then $|\cup_{i=1}^k N_{G_i}(u_1)|\geq k.$ Therefore, we can choose $k$ distinct vertices $v_1, v_2,\dots, v_k$ such that $v_i\in N_{G_i}(u_1)$ for $1\leq i\leq k.$ It follows that $u_1v_i\in E(G_i)$ for $1\leq i\leq k.$ Next we construct a rainbow $k$-factor of $\mathcal{G}.$ Let $G'_i$ be the induced subgraph of $G_i$ on $[2n]\setminus \{u_1\}.$ Then $G'_{k+1}=G'_{k+2}=\cdots=G'_{kn}\cong K_{n.n-1}.$ Hence we can obtain a rainbow bipartite spanning subgraph $F_0$ of $\{G'_{k+1}, G'_{k+2}, \ldots , G'_{kn}\}$ such that
\begin{eqnarray*}
    d_{F}(u) = \left\{ \begin{array}{cc}
        k-1, & u=v_i\text{ for }1\leq i\leq k, \\
        k, & \text{others.} \\
        \end{array} \right.
\end{eqnarray*}
Note that $F_0$ is also a rainbow subgraph of $\{G_{k+1}, G_{k+2}, \ldots , G_{kn}\}$ and $u_1v_i\in E(G_i)$ for $1\leq i\leq k.$ By adding the vertex $u_1$ and edges $u_1v_1, u_1v_2, \dots u_1v_k$ to $F_0,$ we obtain a rainbow $k$-factor $F$ of $\mathcal{G}.$
\begin{case}
    $p\geq 2.$
\end{case}
\setcounter{case}{0}
Recall that $u_1,\dots, u_p$ are the $p$ distinct vertices with degree $k-1$ in $\mathcal{G}.$ Let
\begin{eqnarray*}
    \mathcal{G}_i=\{G_j: d_{G_j}(u_i)=k-1\}
\end{eqnarray*}
and $|\mathcal{G}_i|=n_i$ for $1\leq i\leq p,$ where $\sum_{i=1}^{p}n_i=kn.$ Without loss of generality, we may assume that $n_1\geq n_2\geq \cdots \geq n_p.$ Note that $p\geq 2.$ Then $n_1< kn.$ Let $k_i=\lfloor \frac{n_i}{n} \rfloor$ for $1\leq i\leq p.$ Clearly, $0\leq k_p\leq k_{p-1}\leq \cdots \leq k_1<k.$ If $k_1>0,$ then we choose the largest integer $q\leq p$ such that $k_q>0,$ and let $k'=\sum_{i=1}^q k_i\leq k.$ If $k_1=0,$ then let $q=0$ and $k'=0.$ Choose a subset $\mathcal{G}'_i$ of $\mathcal{G}_i$ such that $|\mathcal{G}'_i|=k_in$ for $1\leq i\leq q.$ Assume that $\mathcal{G}'_i=\{G_1^{(i)}, G_2^{(i)},\dots , G_{k_in}^{(i)}\}.$ Let $\mathcal{G}'=\cup_{i=1}^q\mathcal{G}'_i.$ We first prove the following claim.
\begin{claim}\label{claim1}
    $\mathcal{G}'$ has a rainbow $k'$-factor.
\end{claim}
\begin{proof}
If $q=0,$ then the result is trivial. Assume that $q\geq 1.$ Note that $k_i\leq k-1.$ Then $|\cup_{j=1}^{k_i} N_{G^{(i)}_j}(u_i)|\geq k-1\geq k_i.$ Therefore, we can choose $k_i$ distinct vertices $v_1, v_2,\dots, v_{k_i}$ such that $v_j\in N_{G^{(i)}_j}(u_i)$ for $1\leq j\leq k_i.$ It follows that $u_1v_j\in E(G^{(i)}_j)$ for $1\leq j\leq k_i.$ By using the same argument as the Case \ref{case1}, we can obtain that each $\mathcal{G}'_i$ has a rainbow $k_i$-factor $F_i,$ where $E(F_i)=\{e^{(i)}_1, e^{(i)}_2,\dots , e^{(i)}_{k_in}\}$ and $e^{(i)}_j\in E(G^{(i)}_j)$ for $1\leq i\leq q$ and $1\leq j\leq k_in.$

If $q=1,$ then $k'=k_1$ and $\mathcal{G}'=\mathcal{G}'_1.$ Hence $F_1$ is a rainbow $k'$-factor of $\mathcal{G}'.$ Next we consider $q\geq 2.$ Let $F=F_1\cup F_2\cup \cdots \cup F_q.$ Note that $d_F(u)=k'$ for any $u\in V(F).$ If $E(F_r)\cap E(F_s)=\emptyset$ for any $1\leq r<s\leq q,$ then $F$ is a rainbow $k'$-factor of $\mathcal{G}'.$ If there exist some $1\leq r<s\leq q$ such that $E(F_r)\cap E(F_s)\neq \emptyset,$ then $F$ has multiple edges. Assume that $e^{(r)}_{i_1}=e^{(s)}_{i_2}=vv'\in E(F_r)\cap E(F_s),$ where $v\in X$ and $v'\in Y.$ For the rainbow subgraph $F,$ we next present a multiple-edge removal method that guarantees $d_F(u)=k'$ for every vertex $u\in [2n].$

Note that $q\geq 2.$ Then $k'\geq 2$ and hence $n\geq 2k\geq 2k'\geq 4.$ Then
\begin{eqnarray*}
    &&|E(F)|-2(k'-2)-2(k'-2)(k'-1)-2\\
    &=&k'n-2(k'-2)k'-2\\
    &\geq&2k'^2-2(k'-2)k'-2\\
    &>&1,
\end{eqnarray*}
and hence we can choose an edge $ww'=e^{(t)}_{i_3}\in E(F_t)$ such that $vw',v'w\notin E(F),$ where $w\in X$ and $w'\in Y.$ Without loss of generality, assume that $t\neq r.$ Note that at least one of $v$ and $v'$ is distinct from $u_r,$ and the same holds for $u_t.$ We assume that $v\neq u_t$ and $v'\neq u_r.$

If $vw'\in E(G^{(t)}_{i_3})$ and $v'w\in E(G^{(r)}_{i_1}),$ then we can replace $vv'\in E(G^{(r)}_{i_1})$ by $v'w\in E(G^{(r)}_{i_1})$ and $ww'\in E(G^{(t)}_{i_3})$ by $vw'\in E(G^{(t)}_{i_3}).$ Otherwise, $vw'\notin E(G^{(t)}_{i_3})$ or $v'w\notin E(G^{(r)}_{i_1}).$ By symmetry, we only need to consider the case $v'w\notin E(G^{(r)}_{i_1}).$ Therefore, $v'=u_r$ or $w=u_r.$ Since $v'\neq u_r,$ we have $w=u_r$ and $vw'\in E(G^{(r)}_{i_1}).$ If $v'w\in E(G^{(t)}_{i_3}),$ then replace $vv'\in E(G^{(r)}_{i_1})$ by $vw'\in E(G^{(r)}_{i_1})$ and $ww'\in E(G^{(t)}_{i_3})$ by $v'w\in E(G^{(t)}_{i_3}).$
If $v'w\notin E(G^{(t)}_{i_3}),$ then $v'=u_t$ or $w=u_t.$ Since $w=u_r\neq u_t,$ we have $v'=u_t.$ Note that
\begin{eqnarray*}
    &&|E(F)|-2(k'-2)-2(k'-2)(k'-1)-2(k'-1)-3\\
    &=&k'n-2(k'-2)(k'+1)-5\\
    &\geq& 2k'^2-2(k'-2)(k'+1)-5\\
    &\geq&1.
\end{eqnarray*}
Then there exists an edge $zz'=e^{(t')}_{i_4}\in E(F_{t'})$ such that $z\neq w, z'\neq w'$ and $vz',v'z\notin E(F),$ where $z\in X$ and $z'\in Y.$ This implies that $v'z\in E(G^{(r)}_{i_1}).$ If $vz'\in E(G^{(t')}_{i_4}),$ then replace $vv'\in E(G^{(r)}_{i_1})$ by $v'z\in E(G^{(r)}_{i_1})$ and $zz'\in E(G^{(t')}_{i_4})$ by $vz'\in E(G^{(t')}_{i_4}).$ If $vz'\notin E(G^{(t')}_{i_4}),$ then $v=u_{t'}$ or $z'=u_{t'},$ which implies that $vz'\in E(G^{(t)}_{i_3})$ and $ww'\in E(G^{(t')}_{i_4}).$ Then replace $vv'\in E(G^{(r)}_{i_1})$ by $v'z\in E(G^{(r)}_{i_1}),$ $zz'\in E(G^{(t')}_{i_4})$ by $ww'\in E(G^{(t')}_{i_4})$ and $ww'\in E(G^{(t)}_{i_3})$ by $vz'\in E(G^{(t)}_{i_3}).$

Repeating the above steps until $E(F_r)\cap E(F_s)=\emptyset$ for any $1\leq r<s\leq q,$ we can obtain a rainbow $k'$-factor of $\mathcal{G}'.$
\end{proof}
\setcounter{case}{0}
By Claim \ref{claim1}, we can obtain a rainbow $k'$-factor $F'$ of $\mathcal{G}'$ such that $E(F')=\cup_{i=1}^q\{e^{(i)}_1, e^{(i)}_2,\dots , e^{(i)}_{k_in}\},$ where $e^{(i)}_j\in E(G^{(i)}_j)$ and $G^{(i)}_j\in \mathcal{G}'_i$ for $1\leq i\leq q$ and $1\leq j\leq k_in.$

If $k'=k,$ then $\mathcal{G}'=\mathcal{G}.$ Hence $\mathcal{G}$ has a rainbow $k$-factor $F',$ and the lemma follows. Next we consider the case $0\leq k'\leq k-1.$ Without loss of generality, we suppose that $\mathcal{G}''=\mathcal{G}\setminus\mathcal{G}'=\{G_{k'n+1},..., G_{kn} \}.$
Let $G'_i\cong K_{n,n-1}\cup K_1$ be the subgraph of $G_i$ for $k'n+1\leq i\leq kn.$ Moreover, if $d_{G_i}(u)=k-1,$ then we have $d_{G'_i}(u)=0.$ Now we divide $\mathcal{G}''$ into $k-k'$ groups. Our next goal is to prove that each group has a rainbow perfect matching.
\begin{claim}\label{claim2}
   For $k'+1\leq i\leq k,$ $\{G'_{(i-1)n+1}, G'_{(i-1)n+2},..., G'_{in} \}$ has a rainbow perfect matching.
\end{claim}
\begin{proof}
    According to the definition of $\mathcal{G}'_i,$ we can obtain that for any $n$ graphs of $\{G'_{k'n+1}, G'_{k'n+2}, ..., G'_{kn} \},$ there must exist at least two graphs, say $G'$ and $G'',$ such that $G'\neq G''.$ Note that $G'_{(i-1)n+j}\cong K_{n,n-1}\cup K_1$ for $k'+1\leq i\leq k$ and $1\leq j\leq n.$ Then $\rho(G'_{(i-1)n+j})=\sqrt{n(n-1)}.$ By Corollary \ref{cor1.1},  we can obtain a rainbow perfect matching of $\{G'_{(i-1)n+1}, G'_{(i-1)n+2},..., G'_{in} \}.$
\end{proof}
By Claim \ref{claim2}, $\{G'_{(i-1)n+1}, G'_{(i-1)n+2}, ..., G'_{in}\}$ has a rainbow perfect matching, denoted by $M_i,$ where $k'+1\leq i\leq k.$ Clearly, $M_i$ is also a rainbow perfect matching of $\{G_{(i-1)n+1}, G_{(i-1)n+2}, ..., G_{in}\}.$ Let $E(M_i)=\{e^{(i)}_1, e^{(i)}_2,\dots , e^{(i)}_{n}\}$ such that $e^{(i)}_j\in E(G_{(i-1)n+j})$ for $k'+1\leq i\leq k$ and $1\leq j\leq n.$ Then for any edge $e^{(i)}_j=u'u''\in M_i,$ we always have $u',u''\neq u,$ where $d_{G_{(i-1)n+j}}(u)=k-1.$ Let $F=F'\cup M_{k'+1}\cup M_{k'+2}\cup \cdots \cup M_k.$ Note that $d_F(u)=k$ for any $u\in V(F).$ If $F$ has no multiple edges, then $F$ is a rainbow $k$-factor of $\mathcal{G}.$ Next we consider the case that $F$ has multiple edges. Suppose that $e^{(r)}_{i_1}=e^{(s)}_{i_2}=vv'\in E(F),$ where $k'+1\leq r<s\leq k$ or $1\leq r\leq q< k'+1\leq s\leq k.$ Then $e^{(s)}_{i_2}\in E(G_{(s-1)n+i_2}).$ Assume that  $v\in X,$ $v'\in Y$ and $G_{(s-1)n+i_2}\in \mathcal{G}_l,$ where $1\leq l\leq p.$ Then $v,v'\neq u_l.$ Next we present a multiple-edge removal method which guarantees $d_F(u)=k$ for any $u\in V(F).$

Note that $k\geq 2.$ Then
\begin{eqnarray*}
    &&|E(F)|-2(k-2)-2(k-2)(k-1)-k-2\\
    &=&kn-2(k-1)k-2\\
    &\geq&2k^2-2(k-1)k-2\\
    &>&1.
\end{eqnarray*}
Then there exist an edge $ww'=e^{(t)}_{i_3}\in E(F)$ such that $vw',v'w\notin E(F)$ and $w,w'\neq u_l,$ where $w\in X$ and $w'\in Y.$ If $e^{(t)}_{i_3}\in E(F'),$ then $e^{(t)}_{i_3}\in E(G^{(t)}_{i_3}).$ If $e^{(t)}_{i_3}\in E(M_{k'+1}\cup M_{k'+2}\cup \cdots \cup M_{k}),$ then $e^{(t)}_{i_3}\in E(G_{(t-1)n+i_3}).$ Without loss of generality, we assume that $G_{(t-1)n+i_3}\in \mathcal{G}_t.$ Recall that $v,v',w,w'\neq u_l.$ Then $vw', v'w\in E(G_{(s-1)n+i_2}).$ We claim that $vw'\in E(H)$ or $ v'w\in E(H),$ where $H\in\{G^{(t)}_{i_3}, G_{(t-1)n+i_3}\}.$ In fact, if $vw'\notin H,$ then $v=u_t$ or $w'=u_t,$ and hence $v'w\in H.$ Similarly, if $v'w\notin H,$ then $vw'\in H.$ Then replace $vv'\in E(G_{(s-1)n+i_2})$ by $vw'\in E(G_{(s-1)n+i_2})$ and $ww'\in E(H)$ by $v'w\in E(H)$ or $vv'\in E(G_{(s-1)n+i_2})$ by $v'w\in E(G_{(s-1)n+i_2})$ and $ww'\in E(H)$ by $vw'\in E(H).$

Repeating the above steps until $F$ has no multiple edges, we can obtain a rainbow $k$-factor of $\mathcal{G}.$
\end{proof}
\begin{lemma}\label{lem3.3}
    Let $k\geq 2,$ $n\geq 2k$ and $k+1\leq p\leq n-1$ be three positive integers. Then
$$\rho(K_{p-1, n+k-p+1}\sqcup \widehat{K_{n-p+1, p-k+1}})<\rho(B_{n,k}).$$
\end{lemma}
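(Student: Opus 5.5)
The plan is to compute both spectral radii exactly through equitable quotient matrices (Lemma \ref{lem2.4}) and then compare the resulting quadratic expressions in $\rho^2$. Write $H_p=K_{p-1,\,n+k-p+1}\sqcup \widehat{K_{n-p+1,\,p-k+1}}$ with parts $X_1,Y_1$ of sizes $p-1,\,n+k-p+1$ and $X_2,Y_2$ of sizes $n-p+1,\,p-k+1$.

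\textbf{Structure of $H_p$.} The quasi-complement of a complete bipartite graph has no edges. Hence the edges of $H_p$ are exactly those of $X_1Y_1$, $X_1Y_2$ and $X_2Y_1$, each pair of classes being complete. Since $p\ge k+1$ and $p\le n-1$, all four classes are nonempty. The graph is therefore connected, its adjacency matrix is irreducible, and the partition $\{X_1,X_2,Y_1,Y_2\}$ is equitable.

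\textbf{Reducing to a $2\times 2$ problem.} Because the graph is bipartite, $\rho(H_p)^2$ is the largest eigenvalue of the equitable quotient of $A^2$ restricted to the $X$-side. Put $b_1=n+k-p+1$ and note $b_1+(p-k+1)=n+2$. This quotient is
\[
M_p=\begin{bmatrix}(p-1)(n+2) & (n-p+1)b_1\\ (p-1)b_1 & (n-p+1)b_1\end{bmatrix}.
\]
Its trace and determinant are
\[
\operatorname{tr}M_p=(p-1)(n+2)+(n-p+1)b_1,\qquad \det M_p=(p-1)(n-p+1)\,b_1\,(p-k+1).
\]
The same computation for $B_{n,k}$ (classes of sizes $k-1,\,n-1,\,n-k+1,\,1$) gives
\[
\operatorname{tr}M_0=(k-1)n+(n-k+1)(n-1),\qquad \det M_0=(k-1)(n-k+1)(n-1).
\]

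\textbf{Comparison.} Let $f_p(x)=x^2-\operatorname{tr}M_p\,x+\det M_p$ and let $\theta_0=\rho(B_{n,k})^2$. The inequality of Lemma \ref{lem3.3} is equivalent to two conditions: $f_p(\theta_0)>0$, and $\theta_0>\tfrac12\operatorname{tr}M_p$. One can simplify $f_p(\theta_0)$ using $\theta_0^2=\operatorname{tr}M_0\,\theta_0-\det M_0$. This turns it into a linear expression in $\theta_0$ whose coefficients are polynomials in $n,k,p$. The plan is then to analyse that expression as a function of $p$ on the interval $[k+1,n-1]$, using convexity in $p$ to reduce to the endpoints $p=k+1$ and $p=n-1$, together with $n\ge 2k$.

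\textbf{Main obstacle: the inequality appears false as worded.} A sanity check at the smallest admissible values casts doubt on the statement exactly as written. Take $k=2$, $n=4$, $p=3$.
\begin{itemize}
\item For $H_3$: $\operatorname{tr}M_3=20$ and $\det M_3=32$, so $\rho(H_3)^2=(20+\sqrt{272})/2\approx 18.2$.
\item For $B_{4,2}$: $\operatorname{tr}M_0=13$ and $\det M_0=9$, so $\rho(B_{4,2})^2=(13+\sqrt{133})/2\approx 12.3$.
\end{itemize}
Here the left-hand side is larger, not smaller. This is consistent with the observation that, as indexed, the $Y$-side of $H_p$ has $n+2$ vertices, so $H_p$ is not even a balanced graph on $[2n]$. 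The step I would therefore attempt first is re-verifying this quotient computation carefully.

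If the numbers stand, the literal inequality cannot be proved, and the argument above should instead be run for the intended balanced family arising in the proof of Theorem \ref{thm1.2}. I would leave that identification to the authors' usage later in the proof. The mechanics of the comparison — the $2\times2$ quotient on the $X$-side and the sign of $f_p(\theta_0)$ — remain exactly as described.
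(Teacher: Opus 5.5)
Your counterexample is correct, and the printed inequality in fact fails for every admissible $n,k$. At $p=k+1$ the literal graph has $|Y_1|=n+k-p+1=n$, and all $n$ vertices of $X_1\cup X_2$ are joined to $Y_1$. So it contains $K_{n,n}$ and has spectral radius at least $n>\rho(B_{n,k})$. The subscript is a typo for $n+k-p-1$, and two places in the paper confirm this. The paper's quotient matrix $B_{\Pi_2}$ has first row $(0,0,n+k-p-1,p-k+1)$. This is also exactly the graph produced in Claim~3: there $e_p=\{p,2n+k-p\}\notin E(S_q)$ forces every edge to meet $\{1,\dots,p-1\}$ or $\{n+1,\dots,2n+k-p-1\}$, and the latter set has size $n+k-p-1$. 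So the paper actually proves the lemma for the balanced graph $H_p=K_{p-1,n+k-p-1}\sqcup\widehat{K_{n-p+1,p-k+1}}$.

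The gap is that your proposal stops at this diagnosis. You leave the intended graph unidentified, although Claim~3 pins it down. The comparison itself is only a plan (``convexity in $p$, reduce to the endpoints'') and is never executed. For the corrected graph the comparison is short and needs no endpoint analysis. Put $u=(n-p)(p-k)>0$. Your $X$-side quotient then gives $\operatorname{tr}M_p=\operatorname{tr}M_0-u$ and $\det M_p=(u+(k-1)(n-1))(u+n-k+1)=\det M_0+u(u+kn-2k+2)$, hence
\[
f_p(\theta)=f_0(\theta)+u\,(\theta+u+kn-2k+2).
\]
For $\theta\ge\theta_0$ we have $f_0(\theta)\ge 0$ and the second term is positive. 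So $f_p$ has no root in $[\theta_0,\infty)$, and therefore $\rho(H_p)^2<\theta_0$.

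This is the paper's argument written in $\theta=x^2$. The paper forms $P=P_1-P_2=-(n-p)(p-k)x^2+\text{const}$ from the $4\times4$ quotients, checks $P<0$ at $x=\sqrt{n(n-1)}<\rho(B_{n,k})$, and then uses that $P$ is decreasing for $x>0$. Your identity shows the constant term $\det M_0-\det M_p$ is already negative, so the lower bound $\sqrt{n(n-1)}$ is not even needed.
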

\begin{proof}
Let $G=B_{n,k}$ and $G'=K_{p-1, n+k-p+1}\sqcup \widehat{K_{n-p+1, p-k+1}}.$ The adjacent matrix $A(G)$ of $G$ has the equitable quotient matrix
\begin{eqnarray*}
B_{\Pi_1}=\begin{bmatrix}
   0&0&n-1&1\\
   0&0&n-1&0\\
   k-1&n-k+1&0&0\\
   k-1&0&0&0
\end{bmatrix}.
\end{eqnarray*}
According to Lemma \ref{lem2.4}, we have $\rho(G)=\rho(B_{\Pi_1}).$ By simple computation, the characteristic polynomial of $B_{\Pi_1}$ is
\begin{eqnarray*}
P_1(x)=x^4-[n(n-1)+(k-1)]x^2+(n-1)(n-k+1)(k-1).
\end{eqnarray*}
The adjacent matrix $A(G')$ of $G'$ has the following equitable quotient matrix
\begin{eqnarray*}
B_{\Pi_2}=\begin{bmatrix}
   0&0&n+k-p-1&p-k+1\\
   0&0&n+k-p-1&0\\
   p-1&n-p+1&0&0\\
   p-1&0&0&0
\end{bmatrix}.
\end{eqnarray*}
By Lemma \ref{lem2.4}, we obtain that $\rho(G')=\rho(B_{\Pi_2}).$ Note that the characteristic polynomial of $B_{\Pi_2}$ is
\begin{eqnarray*}
P_2(x)&=&x^4-[n(n+k-p-1)+(p-1)(p-k+1)]x^2\\
&&+(n+k-p-1)(p-k+1)(n-p+1)(p-1).
\end{eqnarray*}
Let $P(x)=P_1(x)-P_2(x).$ Then
\begin{eqnarray*}
    P(x)&=&-(n-p)(p-k)x^2+(n-1)(n-k+1)(k-1)\\
    &&-(n+k-p-1)(p-k+1)(n-p+1)(p-1).
\end{eqnarray*}
Note that $K_{n,n-1}$ is a proper subgraph of $G.$ Then $\rho(G)>\rho(K_{n,n-1})=\sqrt{n(n-1)}.$ Moreover, we can obtain that
\begin{eqnarray*}
    P(\sqrt{n(n-1)})&=&-n(n-p)(p-k)(n-1)+(n-1)(n-k+1)(k-1)\\
    &&-(n+k-p-1)(p-k+1)(n-p+1)(p-1)\\
    &=&(k-p)(n-p)(-p^2+(k+n)p+n(n-1)-2k-2).
\end{eqnarray*}
Since $k+1\leq \frac{k+n}{2}\leq n-1$ and $k\geq 2,$ we have
\begin{eqnarray*}
    -p^2+(k+n)p+n(n-1)-2k-2&\geq&-(n-1)^2+(k+n)(n-1)+n(n-1)-2k-2\\
                           &=&n^2+kn-3k+1\\
                           &\geq& 4k^2+2k^2-3k+1\\
                           &=&3k(k-2)+1\\
                           &>&0.
\end{eqnarray*}
By $k+1\leq p\leq n-1,$ we can obtain
\begin{eqnarray*}
    P(\sqrt{n(n-1)})=(k-p)(n-p)(-p^2+(k+n)p+n(n-1)-2k-2)<0.
\end{eqnarray*}
It is straightforward to check that if $x>\sqrt{n(n-1)},$ then $P(x)<P(\sqrt{n(n-1)})<0,$ which implies that $\rho(G')<\rho(G).$
\end{proof}

Now we are in a position to present the proof of Theorem \ref{thm1.2}.
\vspace{1mm}

\medskip
\noindent  \textbf{Proof of Theorem \ref{thm1.2}.}
 Suppose to the contrary that $\mathcal{G}=\{G_1, G_2, \ldots,$ $ G_{kn}\}$ has no rainbow $k$-factor. For convenience, we denote $S_i=S(G_i)$ for every $ i\in [kn].$ It follows from Lemma \ref{lem2.3} that the bi-shifted family $\{S_1,$ $S_2,$ $\ldots,$ $S_{kn}\}$ has no rainbow $k$-factor. By Lemma \ref{lem2.1} and $\rho(G_i)\geq \rho(B_{n,k})$, we have
\begin{eqnarray}\label{eq1}
    \rho(S_i)\geq \rho(G_i)\geq \rho(B_{n,k}).
\end{eqnarray}
Next we show that $S_i\cong B_{n,k}$ for every $i\in[kn].$ Without loss of generality, we suppose that $X=[n]$ and $Y=[2n]\setminus[n].$ Define $e_j=\{j, 2n+k-j\},$ where $k\leq j\leq n.$
\begin{claim}\label{claim3}
$\{e_{k+1}, e_{k+2}, \ldots, e_{n-1}\}\subseteq E(S_i)$ for each $i\in [kn].$
\end{claim}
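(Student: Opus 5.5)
The plan is to argue by contradiction for a fixed $i\in[kn]$ and a fixed $j$ with $k+1\leq j\leq n-1$. Suppose $e_j=\{j,2n+k-j\}\notin E(S_i)$. Note that $j\in X=[n]$ and $2n+k-j\in Y=[2n]\setminus[n]$, since $n+k+1\leq 2n+k-j\leq 2n-1$. The idea is that a single missing edge in a bi-shifted graph forces a whole ``upper-right'' block of edges to be missing. That confines $S_i$ inside the graph from Lemma \ref{lem3.3}, whose spectral radius is too small.

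\textbf{Step 1 (block of non-edges).} Since $S_i$ is bi-shifted, Observation \ref{obs::1} applies in contrapositive form: if $\{x_1,y_1\}\notin E(S_i)$, then $\{x_2,y_2\}\notin E(S_i)$ for all $x_2\geq x_1$ in $X$ and $y_2\geq y_1$ in $Y$. Taking $x_1=j$ and $y_1=2n+k-j$, no vertex of $X_2=\{j,\dots,n\}$ is adjacent in $S_i$ to any vertex of $Y_2=\{2n+k-j,\dots,2n\}$. Here $|X_2|=n-j+1$ and $|Y_2|=j-k+1$.

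\textbf{Step 2 (identify the host graph).} Put $p=j$, so $k+1\leq p\leq n-1$. Let $X_1=[p-1]$ and $Y_1=Y\setminus Y_2$, so $|Y_1|=n+k-p-1$. By Step 1, $S_i$ is a spanning subgraph of the bipartite graph $H_p$ consisting of all $X$--$Y$ edges except those between $X_2$ and $Y_2$. Its equitable partition $(X_1,X_2,Y_1,Y_2)$ has exactly the quotient matrix $B_{\Pi_2}$ of Lemma \ref{lem3.3}. Hence $H_p\cong K_{p-1, n+k-p+1}\sqcup \widehat{K_{n-p+1, p-k+1}}$, where the indexing is that of the paper, and Lemma \ref{lem3.3} gives $\rho(H_p)<\rho(B_{n,k})$.

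\textbf{Step 3 (monotonicity and contradiction).} The adjacency matrix of a subgraph is entrywise dominated by that of the host graph. By Perron--Frobenius monotonicity for nonnegative matrices, $\rho(S_i)\leq\rho(H_p)$, and this requires no connectivity of $S_i$. Combining with \eqref{eq1} gives
$$\rho(B_{n,k})\leq\rho(S_i)\leq\rho(H_p)<\rho(B_{n,k}),$$
a contradiction. So $e_j\in E(S_i)$ for every $k+1\leq j\leq n-1$ and every $i$.

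The main point needing care is Step 2: matching the block sizes of $H_p$ with the (slightly nonstandard) notation of Lemma \ref{lem3.3}. I would do this by checking the quotient matrix $B_{\Pi_2}$ directly. I would also confirm that the range $k+1\leq p\leq n-1$ is exactly the range where Lemma \ref{lem3.3} applies, which is why the claim excludes $e_k$ and $e_n$.
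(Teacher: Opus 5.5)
Your proof is correct and follows the paper's argument exactly: if $e_p\notin E(S_q)$, Observation \ref{obs::1} rules out every edge between $\{p,\dots,n\}$ and $\{2n+k-p,\dots,2n\}$. So $S_q$ lies inside the host graph of Lemma \ref{lem3.3}, and monotonicity of the spectral radius then contradicts \eqref{eq1}. Identifying the host graph through the quotient matrix $B_{\Pi_2}$ is the right way to read the paper, since those block sizes show that its subscript $n+k-p+1$ should be $n+k-p-1$ (this also recovers $B_{n,k}$ at $p=k$).
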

\begin{proof}
Assume that Claim \ref{claim3} does not hold. Then there exist some $k+1\leq p \leq n-1$ and some $q\in [kn]$ such that $e_p=\{p, 2n+k-p\} \notin E(S_q).$ Since $S_q$ is bi-shifted, by Observation \ref{obs::1}, we can obtain that $1\leq i<p$ or $n+1\leq j <2n+k-p$ for each edge $\{i, j\}\in E(S_q).$ This implies that $S_q$ is a subgraph of $K_{p-1, n+k-p+1}\sqcup \widehat{K_{n-p+1, p-k+1}}.$ Hence $\rho(S_q)\leq \rho(K_{p-1, n+k-p+1}\sqcup \widehat{K_{n-p+1, p-k+1}}).$ By Lemma \ref{lem3.3}, we obtain that
\begin{eqnarray*}
    \rho(S_q)\leq \rho(K_{p-1, n+k-p+1}\sqcup \widehat{K_{n-p+1, p-k+1}})<\rho(B_{n,k}),
\end{eqnarray*}
which contradicts (\ref{eq1}).
\end{proof}
\begin{claim}\label{claim4}
    For $i\in[kn],$ $\delta(S_i)\geq k-1.$
\end{claim}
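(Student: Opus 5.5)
The plan is to show that in a bi-shifted graph the minimum degree on each side is attained at the last vertex of that side. Then a vertex of small degree forces $S_q$ into a proper subgraph of $B_{n,k}$, which contradicts (\ref{eq1}).

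\textbf{Step 1: the minimum degree sits at the last vertex.} Suppose the claim fails, so $d_{S_q}(z)\le k-2$ for some $q\in[kn]$ and some vertex $z$. By the symmetry between $X=[n]$ and $Y=[2n]\setminus[n]$ (the argument below only uses the bi-shifted property, which treats the two parts alike), we may assume $z\in X$. Apply Observation \ref{obs::1} with $x_1=x\le x_2=n$ and $y_1=y_2=y$: if $\{n,y\}\in E(S_q)$, then $\{x,y\}\in E(S_q)$. Hence $N_{S_q}(n)\subseteq N_{S_q}(x)$ for every $x\in X$, so $d_{S_q}(n)=\min_{x\in X}d_{S_q}(x)\le k-2$. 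Similarly, if $z\in Y$ then vertex $2n$ has minimum degree in $Y$.

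\textbf{Step 2: $S_q$ lies in a proper subgraph of $B_{n,k}$.} Vertex $n$ misses at least $n-k+2$ vertices of $Y$. Therefore $S_q$ is a subgraph of the graph $H$ obtained from $K_{n,n}$ by deleting $n-k+2$ edges at vertex $n$. Up to isomorphism, $H=K_{k-2,n-1}\sqcup\widehat{K_{n-k+2,1}}$, which is $B_{n,k}$ with one further edge at its degree-$(k-1)$ vertex removed. So $H$ is a proper spanning subgraph of $B_{n,k}$. This uses that $S_q$ and $B_{n,k}$ live on the same vertex set with the same bipartition, so the isomorphism only relabels $Y$.

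\textbf{Step 3: strict spectral comparison.} The graph $B_{n,k}$ is connected, since every vertex other than the special one has degree at least $n-1\ge 1$ and $n\ge 2k$. By the Perron--Frobenius theorem, deleting an edge from a connected graph strictly decreases the spectral radius, and spectral radius is monotone under taking subgraphs. Hence
\[
\rho(S_q)\le \rho(H)<\rho(B_{n,k}),
\]
which contradicts (\ref{eq1}). Therefore $\delta(S_i)\ge k-1$ for every $i\in[kn]$.

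I do not expect a serious obstacle. The only point needing care is Step 2: the isomorphism identification must hold as a subgraph relation on the fixed vertex set, which it does because only the labels within $Y$ change.
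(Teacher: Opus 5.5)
Your argument is correct and is essentially the paper's. A vertex of degree at most $k-2$ places $S_i$ inside a proper subgraph of a copy of $B_{n,k}$, and since $B_{n,k}$ is connected this gives $\rho(S_i)<\rho(B_{n,k})$, contradicting $\rho(S_i)\ge\rho(B_{n,k})$. You simply spell out the ``up to isomorphism'' point that the paper leaves implicit.

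Your Step 1 is harmless but unnecessary. Step 2 works verbatim for any vertex $z$ of degree at most $k-2$, without using that $S_q$ is bi-shifted.
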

\begin{proof}
    If $\delta(S_i)< k-1,$ then $S_i$ is a proper subgraph of $B_{n,k}.$ This implies that
    $$\rho(S_i)<\rho(B_{n,k}),$$
    which contradicts (\ref{eq1}).
\end{proof}
By Claim \ref{claim4}, we have $\delta(S_i)\geq k-1.$ If $d_{S_i}(2n)=k-1$ for each $i\in[kn],$ then $S_i$ is a subgraph of $B_{n,k}.$ It follows from (\ref{eq1}) that $\rho(S_i)=\rho(B_{n,k}).$ Since $B_{n,k}$ is connected, we have $S_i\cong B_{n,k}$ for each $i\in[kn].$ Similarly, if $d_{S_i}(n)=k-1$ for each $i\in[kn],$ then we have $S_i\cong B_{n,k}.$ Next we assume that there exist $p, q\in [kn]$ such that $d_{s_p}(n)\geq k$ and $d_{s_q}(2n)\geq k.$ Recall that $S_p$ and $S_q$ are bi-shifted. By Observation \ref{obs::1}, we can obtain that $e_n=\{n, n+k\}\in E(S_p)$ and $e_k=\{k, 2n\}\in E(S_q).$
\begin{claim}\label{claim5}
    $p\neq q.$
\end{claim}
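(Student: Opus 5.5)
We read Claim \ref{claim5} as saying that the indices $p,q$ fixed just before it can be chosen to be distinct. Recall that $p$ satisfies $d_{S_p}(n)\geq k$ and $q$ satisfies $d_{S_q}(2n)\geq k$. The plan is to argue by contradiction: assume no such choice with $p\neq q$ is possible, and show that this forces some member of the family to have spectral radius strictly below $\rho(B_{n,k})$, contradicting (\ref{eq1}).

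\textbf{Step 1 (reduction).} Suppose every admissible choice has $p=q$. Then $S_p$ is the unique graph of the family with $d_{S_p}(n)\geq k$. Indeed, if some $S_{p'}$ with $p'\neq q$ also had $d_{S_{p'}}(n)\ge k$, the pair $(p',q)$ would be admissible with $p'\neq q$. Symmetrically, $S_p$ is the unique graph with $d(2n)\geq k$. Since $kn\geq 2$, we may pick an index $i\neq p$. Claim \ref{claim4} gives $\delta(S_i)\geq k-1$, so for this $i$ we get
\[
d_{S_i}(n)=d_{S_i}(2n)=k-1 .
\]

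\textbf{Step 2 (structure of $S_i$).} Since $S_i$ is bi-shifted, Observation \ref{obs::1} shows that the $k-1$ neighbours of $n$ are exactly $n+1,\dots,n+k-1$, and the neighbours of $2n$ are exactly $1,\dots,k-1$. More precisely, every edge at $n$ is $\{n,y\}$, so every vertex $y'\le y$ of $Y$ is also a neighbour of $n$; hence $N(n)$ is an initial segment of $Y$, and likewise for $2n$. Consequently $S_i$ is a subgraph of the bipartite graph $D$ obtained from $K_{n,n}$ by deleting all edges from $n$ to $\{n+k,\dots,2n\}$ and all edges from $2n$ to $\{k,\dots,n\}$.

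\textbf{Step 3 (spectral comparison).} The graph $D$ is a proper subgraph of a copy of $B_{n,k}$. In $B_{n,k}$, take vertex $n$ as the vertex of degree $k-1$, adjacent to $n+1,\dots,n+k-1$. Then $2n$ has degree $n-1$ in $B_{n,k}$, missing only the edge $\{n,2n\}$ (note $2n\notin N(n)$ since $k-1<n$). In $D$, by contrast, $d_D(2n)=k-1<n-1$ because $n\geq 2k$. Hence
\[
S_i\subseteq D\subsetneq B_{n,k}.
\]
Since $B_{n,k}$ is connected, Perron--Frobenius monotonicity gives $\rho(S_i)\le\rho(D)<\rho(B_{n,k})$. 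This contradicts (\ref{eq1}), so $p\neq q$ can be arranged.

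\textbf{Main obstacle.} The delicate point is Step 1: making precise that ``$p\neq q$ fails'' really means both vertices $n$ and $2n$ have degree exactly $k-1$ in \emph{every} other graph. This uses the minimality from Claim \ref{claim4} together with the availability of a second index. Once that is settled, the strict-subgraph comparison in Step 3 is routine.
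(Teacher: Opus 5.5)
Your proposal is correct and follows essentially the same route as the paper. Assuming $p=q$ is forced, every other $S_i$ has $d_{S_i}(n)=d_{S_i}(2n)=k-1$, hence is a proper subgraph of (a copy of) $B_{n,k}$, which contradicts (\ref{eq1}); your Steps 2--3 just make explicit, via Observation \ref{obs::1}, the containment $S_i\subseteq D\subsetneq B_{n,k}$ that the paper asserts without detail.
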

\begin{proof}
    Suppose to the contrary that $p=q.$ Without loss of generality, we assume that $p=q=1.$ We first prove that $d_{S_i}(n)=k-1$ and $d_{S_i}(2n)=k-1$ for each $2\leq i\leq kn.$ In fact, if there exists some $2\leq j\leq kn$ such that $d_{S_j}(n)\geq k$ or $d_{S_j}(2n)\geq k,$ then we obtain that $d_{S_1}(n)\geq k$ and $d_{S_j}(2n)\geq k$, or $d_{S_1}(2n)\geq k$ and $d_{S_j}(n)\geq k.$ This implies that $e_n\in E(S_1)$ and $e_k\in E(S_j)$ or $e_k\in E(S_1)$ and $e_n\in E(S_j),$ which contradicts $p=q.$  Hence $S_i$ is a proper subgraph of $B_{n,k}$ for $2\leq i\leq kn.$ Since $B_{n,k}$ is connected, we have $\rho(S_i)<\rho(B_{n,k}),$ which contradicts (\ref{eq1}).
\end{proof}
By Claim \ref{claim5}, we have $p\neq q.$ Without loss of generality, we assume that $p=kn$ and $q=(k-1)n+k.$ Then $e_n\in E(S_{kn})$ and $e_k\in E(S_{(k-1)n+k}).$ Now we construct $k$ perfect matchings. Let $M_i$ be a perfect matching on the vertex set $[2n],$ where $1\leq i\leq k.$ Denote $E(M_i)=\{e^{(i)}_1, e^{(i)}_2, \dots , e^{(i)}_n\},$ where
\begin{eqnarray*}
    e^{(i)}_j = \left\{ \begin{array}{cc}
        \{j, n+i-j\}, & 1\leq j\leq i-1; \\
        \{j, 2n+i-j\}, & i\leq j\leq n.
        \end{array} \right.
\end{eqnarray*}
\begin{claim}\label{claim6}
    $e^{(i)}_j\in E(S_t)$ for each $t\in[kn],$ where $e^{(i)}_j\neq e_k, e_n.$
\end{claim}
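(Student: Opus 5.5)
The plan is to reduce every edge $e^{(i)}_j$ to an edge that is already known to lie in $S_t$, using that $S_t$ is bi-shifted. By Observation \ref{obs::1}, if $\{x,y\}\in E(S_t)$ with $x\in X$, $y\in Y$, then every $\{x',y'\}$ with $x'\le x$, $y'\le y$, $x'\in X$, $y'\in Y$ is also in $E(S_t)$. Two families of edges serve as sources.
\begin{itemize}
\item[(a)] By Claim \ref{claim3}, $e_m=\{m,2n+k-m\}\in E(S_t)$ for every $k+1\le m\le n-1$ and every $t\in[kn]$. This range is nonempty because $n\ge 2k\ge k+2$.
\item[(b)] By Claim \ref{claim4}, $d_{S_t}(2n)\ge k-1$ and $d_{S_t}(n)\ge k-1$. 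Since $S_t$ is bi-shifted, the neighbourhood of $2n$ is an initial segment of $X$ and the neighbourhood of $n$ is an initial segment of $Y$. Hence $\{x,2n\}\in E(S_t)$ for $1\le x\le k-1$, and $\{n,y\}\in E(S_t)$ for $n+1\le y\le n+k-1$.
\end{itemize}

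Fix $1\le i\le k$ and split according to the two definitions of $e^{(i)}_j$.

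First let $1\le j\le i-1$, so $e^{(i)}_j=\{j,n+i-j\}$ with $j\le k-1$ and $n+i-j\le n+k-1$. Compare with $e_{k+1}=\{k+1,2n-1\}$ from (a). We have $j\le k+1$, and $n+k-1\le 2n-1$ because $n\ge k$. So $e^{(i)}_j$ is dominated by $e_{k+1}$ and lies in $S_t$.

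Now let $i\le j\le n$, so $e^{(i)}_j=\{j,2n+i-j\}$. There are four subcases.
\begin{itemize}
\item If $k+1\le j\le n-1$, compare with $e_j$ from (a). Since $i\le k$, we get $2n+i-j\le 2n+k-j$, so $e^{(i)}_j$ is dominated by $e_j$.
\item If $i<j\le k$, compare with $e_{k+1}$. We have $j\le k+1$, and $2n+i-j\le 2n-1$ because $j\ge i+1$. So $e^{(i)}_j$ is dominated by $e_{k+1}$.
\item If $j=i\le k$, the edge is $\{i,2n\}$. When $i\le k-1$ it lies in $S_t$ by (b). When $i=k$ it is exactly $e_k$, which is excluded.
\item If $j=n$ (with $j\ge i$), the edge is $\{n,n+i\}$. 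When $i\le k-1$ it lies in $S_t$ by (b). When $i=k$ it is exactly $e_n$, which is excluded.
\end{itemize}
These cases exhaust all pairs $(i,j)$ with $1\le i\le k$ and $1\le j\le n$.

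The only real obstacle is bookkeeping at the boundary. Edges incident to the last vertices $n$ and $2n$ cannot be reached from the $e_m$ in (a), since those have $Y$-coordinate at most $2n-1$ and $X$-coordinate at most $n-1$. These boundary edges therefore have to come from the minimum-degree bound of Claim \ref{claim4}. That bound gives precisely $k-1$ forced neighbours, which is why the two edges $e_k$ and $e_n$ must be excluded. I would double-check the inequality $n-1\ge k+1$ (needed so that $e_{k+1}$ exists), which follows from $n\ge 2k$ and $k\ge 2$.
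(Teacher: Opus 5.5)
Your proof is correct and takes essentially the same route as the paper: you dominate each $e^{(i)}_j$ via Observation~\ref{obs::1}, either by an edge $e_m$ from Claim~\ref{claim3} or by the initial-segment neighbourhoods forced by $\delta(S_t)\ge k-1$ from Claim~\ref{claim4}. Your case analysis is in fact slightly more complete than the paper's write-up, which only records that vertices $1,\dots,k-1$ are complete to $Y$ and leaves implicit the edges $\{n,n+i\}$ for $i\le k-1$; you obtain these explicitly from $d_{S_t}(n)\ge k-1$.
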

\begin{proof}
    By $\delta(S_t)\geq k-1$ and Observation \ref{obs::1}, we have $\{i,j\}\in E(S_t)$ for each $t\in[kn],$ where $1\leq i\leq k-1$ and $n+1\leq j\leq 2n.$ Then for $1\leq i\leq k$ and $1\leq j\leq i-1,$ $\{j, n+i-j\}\in E(S_t)$ and for $1\leq i\leq k-1$ and $i\leq j\leq k-1,$ $\{j, 2n+i-j\}\in E(S_t).$ By Claim \ref{claim3}, $e_j=\{j, 2n+k-j\}\in E(S_t)$ for $k+1\leq j\leq n-1.$ According to Observation \ref{obs::1} we have $\{j, 2n+i-j\}\in E(S_t),$ where $1\leq i\leq k-1$ and $j=k$ or $1\leq i\leq k$ and $k+1\leq j\leq n-1.$ Hence we can obtain that $e^{(i)}_j\in E(S_t)$ for each $t\in[kn],$ where $e^{(i)}_j\neq e_k, e_n.$
\end{proof}
Note that $e^{(k)}_k=e_k$ and $e^{(k)}_n=e_n.$ Combining Claim \ref{claim6}, $e_k\in E(S_{(k-1)n+k})$ and $e_n\in E(S_{kn}),$ we choose $e^{(i)}_j\in E(S_{(i-1)n+j})$ for $1\leq i\leq k$ and $1\leq j\leq n.$ Then $M_i$ is a rainbow perfect matching of $\{S_{(i-1)n+1},$ $S_{(i-1)n+2},$ $\ldots,$ $S_{in}\}.$ Let $F=M_1\cup M_2\cup \cdots \cup M_k.$ Since $E(M_i)\cap E(M_j)=\emptyset$ for $1\leq i<j\leq k,$ $F$ is a rainbow $k$-factor of $\mathcal{G},$ a contradiction.

Therefore, $S_i\cong B_{n,k}$ for each $i\in [kn].$ Combining (\ref{eq1}), we have $\rho(S_i)=\rho(G_i)=\rho(B_{n,k}).$ By Corollary \ref{cor3.1}, $G_i\cong B_{n,k}.$ If there exist $p,q\in [kn]$ such that $G_p\neq G_q,$ then by Lemma \ref{lem3.2}, $\mathcal{G}$ has a rainbow $k$-factor, a contradiction. Hence $G_1=G_2=\cdots=G_{kn}$ and $G_1\cong B_{n,k}.$
\hspace*{\fill}$\Box$

\vspace{5mm}
\noindent
{\bf Declaration of competing interest}
\vspace{3mm}

The authors declare that they have no known competing financial interests or personal relationships that could have appeared to influence the work reported in this paper.

\vspace{5mm}
\noindent
{\bf Data availability}
\vspace{3mm}

No data was used for the research described in this paper.

\vspace{5mm}
\noindent
{\bf Acknowledgement}

\vspace{3mm}
The research of Ruifang Liu is supported by the National Natural Science Foundation of China {(No. 12371361)} and Distinguished Youth Foundation of Henan Province {(No. 242300421045)}.


\end{document}